\newcounter{ExacSeq}
\newcommand{\stkout}[1]{\ifmmode\text{\sout{\ensuremath{#1}}}\else\sout{#1}\fi}
\newcommand{\Title}{Title}
\numberwithin{equation}{section}
\theoremstyle{definition}\newtheorem{definition}{Definition}[section]
	\newtheorem{defititle}[definition]{\Title}
	\newtheorem{remark}[definition]{Remark}
	\newtheorem{remarks}[definition]{Remarks}
\newtheorem{prop}[definition]{Proposition}
\newtheorem{proposition-definition}[definition]{Proposition-Definition}
\newtheorem{lemma}[definition]{Lemma}
\newtheorem{thm}[definition]{Theorem}
\newtheorem*{prop*}{Proposition}
\newtheorem*{theorem*}{Theorem}
\newcommand{\cJ}{\mathcal{J}}
\newcommand{\Q}{\mathbb{Q}}
\newcommand{\id}{{\hbox{id}}}
\newcommand{\eg}{{\it e.g.}\/ }
\newcommand{\cf}{{\it cf.}\/ }
\def\gpd{\,\lower1pt\hbox{$\longrightarrow$}\hskip-.24in\raise2pt
	\hbox{$\longrightarrow$}\,}
\renewcommand{\latticebody}{\drop@{ }}
\newcommand{\Z}{\ensuremath{\mathbb Z}}
\newcommand{\C}{\ensuremath{\mathbb C}}
\newcommand{\R}{\ensuremath{\mathbb R}}
\newcommand{\T}{\ensuremath{\mathbb{T}}}
\newcommand{\cA}{\mathcal{A}}
\newcommand{\cX}{\mathcal{X}}
\DeclareMathOperator{\pr}{pr} 
\def\act{\mathbin{\hbox{$<\kern-.4em\mapstochar\kern.4em$}}}
\def\ract{\mathbin{\hbox{$\mapstochar\kern-.3em>$}}}
\def\PB(#1,#2,#3,#4){\left\{\begin{matrix}#1&\!\!\!\stackrel{?}{\longrightarrow}&\!\!\!#2\\
		\downarrow&&\!\!\!\downarrow\\
		#3&\!\!\!\stackrel{?}{\longrightarrow}&\!\!\!#4\end{matrix}\right\}}
\def\pb(#1,#2,#3,#4){ \hom(#1 \to #3, #2 \to #4)}
\begin{document}
	
	
	\begin{center}
		{\Large\bf On a remark by Alan Weinstein\footnote{AMS subject classification: 22A10,~ Secondary  53C12. Keywords: diffeological group, $C^*$-algebra}
			
			\bigskip
			
			{\sc by Iakovos Androulidakis}
		}
		
	\end{center}

	{\footnotesize
		\vskip -2pt National and Kapodistrian University of Athens
		\vskip -2pt Department of Mathematics
		\vskip -2pt Panepistimiopolis
		\vskip -2pt GR-15784 Athens, Greece
		\vskip -2pt e-mail: \texttt{iandroul@math.uoa.gr}
	}
	\bigskip
	

	\begin{abstract}
	Alan Weinstein remarked that, working in the framework of diffeology, a construction from Noncommutative Differential Geometry might provide the non-trivial representations required for the geometric quantisation of a symplectic structure which is not integral.  In this note we show that the construction we gave with P. Antonini does indeed provide non-trivial representations.
	\end{abstract}
	
	
	
	
	
	\setcounter{tocdepth}{2} 
	\tableofcontents
	
	\section*{Introduction}
	Let $(M,\omega)$ be a simply connected symplectic manifold. The first step towards geometric quantization (see \eg \cite{Souriau80}) is prequantization: The group  $\Gamma_{\omega}$ of periods of $[\omega] \in H^2_{dR}(M,\R)$ is a subgroup of the abelian group $(\R,+)$. When $\Gamma_{\omega}$ is cyclic (hence discrete), the quotient $G(M,\omega) = \R / \Gamma_{\omega}$ can be identified with the circle $U(1)$ and there exists a principal $U(1)$-bundle $Q$ over $M$ with a connection whose curvature is the original symplectic form $\omega$. Then, using a non-trivial representation of $G(M,\omega)$, one constructs a Hilbert space which is the crucial ingredient of the quantisation process. 
	
	Alan Weinstein pointed out in \cite[Rem. 5.1]{We89} that, when $\Gamma_{\omega}$ is a non-closed dense subgroup of $\R$, the construction of the Hilbert space is no longer possible. That is because the topology of the group $G(M,\omega)$ is trivial, which allows continuity only for the trivial representation. In the same remark, Weinstein proposed that some construction from noncommutative differential geometry might allow to ``quantize the unquantizable''. In other words, to obtain non-trivial representations. Note that when $\Gamma_{\omega}$ is not cyclic we can still make sense of the prequantizing bundle, using the notion of \textit{diffeology} which was introduced by Souriau \cite{Souriau70}. Actually, the results of \cite{We89} hold in this setting as well.
	
	The purpose of the present note is to confirm that, looking at the group $G(M,\omega)$ as a diffeological space, noncommutative differential geometry does indeed allow us to find non-trivial representations. The geometric construction suggested in \cite[Rem. 5.1]{We89} is given in \cite{AnAn18}: There we showed  that when the group $G(M,\omega)$ is not a circle, adding a few extra dimensions allows us to realise it as the leaf space of an irrational rotation foliation on a torus. As we will discuss here, this torus generates the atlas of bisubmersions of this foliation, in the sense of \cite{AndrSk}. These bisubmersions give rise to the diffeological structure of $G(M,\omega)$.
	
	What remains is to explain why this construction provides non-trivial representations. Recall that the representations ring of a Lie group can be recovered from the $K$-theory of its reduced $C^{\ast}$-algebra. Here we first explain how to make sense of $C^{*}(G(M,\omega))$ by applying the $C^{*}$-algebra construction we gave in \cite{AndrSk} to the diffeological structure generated by the torus. In order to show that the $K$-theory of this $C^{*}$-algebra is not trivial, we prove in Theorem \ref{thm} that $C^{*}(G(M,\omega))$ corresponds to the irrational rotation algebra. This isomorphism should be thought of as an analogue of Thom isomorphism.
	\bigbreak
	\ 
	\textbf{Acknowledgements} I would like to thank Alan Weinstein for bringing \cite{We89} to my attention. I would also like to thank Collegium Helveticum and the Institute of Mathematics of the University of Zurich, for their hospitality during the revision of this article.



\section{The Almeida-Molino groupoid as a diffeological groupoid}\label{sec:}

We will use the convenient setting of Lie algebroids and Lie groupoids throughout this note. In this section, first we recall this setting and then justify our construction of the convolution algebra $G(M,\omega)$ using diffeology.

\subsection{Integrality, Integrability and lifting}

Let us recall briefly how the integrality problem of the symplectic manifold $(M,\omega)$ can be viewed as an integrability problem for transitive Lie algebroids. This was first explained by Mackenzie in \cite{MK2}, see also \cite{Crainic04}. 

Consider the vector bundle $TM \oplus (M \times \R)$ over $M$. Since the 2-form $\omega$ is closed, the following Lie bracket on the $C^{\infty}(M)$-module of smooth sections of this bundle, satisfies the Jacobi identity: $$[X \oplus f,Y\oplus g] = [X,Y] \oplus \{X(g)-Y(f)-\omega(X,Y)\}$$ Here $X,Y \in \cX(M)$ and $f,g \in C^{\infty}(M)$. This bracket also satisfies the Leibniz identity with respect to the anchor map $X \oplus f \mapsto X$, therefore it defines a transitive Lie algebroid which we denote $TM \oplus_{\omega}(M \times \R)$. It integrates to a transitive Lie groupoid with isotropy group $U(1)$ if and only if the group $\Gamma_{\omega}$ of periods of $\omega$ is discrete. In this case, the group $G(M,\omega)$ is a Lie group; it is the isotropy group of the connected and simply connected Lie groupoid which integrates $TM \oplus_{\omega}(M \times \R)$, at an arbitrary point $x \in M$. The associated principal $U(1)$-bundle, together with the connection arising from the splitting $X \mapsto X \oplus 0$ of the anchor map, constitute the prequantization data of $(M,\omega)$.

The group $\Gamma_{\omega}$ of periods may not be closed though. For example, put $M = S^2 \times S^2$ and $\omega = p_1^{\ast}(V) \oplus \lambda p_2^{\ast}(V)$, where $V$ is a volume form of $S^2$, $p_i : M \to S_2$ the projection maps, $i=1,2$ and $\lambda$ an irrational real number. Then $\Gamma_{\omega} = \Z + \lambda\Z$ is a dense and non-closed subgroup of $\R$. Hence, the isotropy group $G(M,\omega) = \R / \Gamma_{\omega}$ has a quite pathological quotient topology. 

In fact, the Lie algebroid $TM \oplus_{\omega}(M \times \R)$ associated to this symplectic structure is the first example of a non-integrable (transitive) Lie algebroid; it was given by Almeida and Molino in \cite{AM}. As shown in \cite{AnAn18}, this example encompasses all the ``unquantizable'' (equivalently, non-integrable) symplectic structures.

Let us recall from \cite[Ex. 2.7]{AnAn18} a construction which, as we will explain in \S \ref{sec:naive}, allows us to view the Almeida-Molino example, within foliation theory and realise the topologically ill-behaved group $G(M,\omega)$ as a leaf space. This starts from observing that the reason for the pathology of $G(M,\omega)$ is really that the dimension of the vector space $\R$ is too small to accommodate an appropriate lattice. So, naively, we add one more dimension. Namely, we consider the $\R^2$-valued 2-form $\overline{\omega} = (p_1^{\ast}(V), \lambda p_2^{\ast}(V)) : TM \times TM \to M \times \R^2$. That is to say, the form defined by $$\overline{\omega}(X_{(x,y)},Y_{(x,y)})=((x,y),(p_1^{\ast}(V)(X_{(x,y)},Y_{(x,y)}),\lambda p_2^{\ast}(V)(X_{(x,y)},Y_{(x,y)}))$$ for all $(x,y) \in S^2 \times S^2$ and $X,Y \in \cX(S^2 \times S^2)$. Its group of periods $\Gamma_{\overline{\omega}}$ is the lattice $\Z \times \lambda\Z$ of $\R^2$, whence $TM \oplus_{\overline{\omega}}(M \times \R^2)$ is an integrable transitive Lie algebroid. The associated isotropy group $G(M,\overline{\omega}) = \R^2 / \Gamma_{\overline{\omega}}$ is diffeomorphic to the torus $\T^2$. Moreover, we have the ``addition'' map; that is to say the surjective morphism of Lie algebroids $$TM \oplus_{\overline{\omega}}(M \times \R^2) \to TM \oplus_{\omega}(M \times \R) \qquad (X,(a,b)) \mapsto (X,a+b)$$  for every $X \in TM$ and $a, b \in \R$. Integrating this map and restricting to the isotropy groups at an arbitrary point $x \in M$, we obtain the short exact sequence of continuous group morphisms 
\begin{eqnarray}\label{seq1}
0 \to \R \stackrel{\iota}{\longrightarrow} \mathbb{T}^2 \stackrel{\pi}{\longrightarrow} G(M,\omega) \to 1
\end{eqnarray}
The inclusion map $\iota$ is the irrational rotation with slope $\lambda \in \R \setminus \Q$ on the torus. Of course, in the exact sequence \eqref{seq1}, $(\R,+)$ and $\mathbb{T}^2$ are Lie groups and $\iota$ is a (smooth) morphism of Lie groups. 

In \cite[Thm. 3.6]{AnAn18} it was shown that the above lifting construction can be done systematically for an arbitrary simply connected\footnote{In \cite[\S 4]{AnAn18} it is shown that the simple connectivity assumption can be lifted, under a mild assumption.} symplectic manifold $(M,\omega)$ with finitely generated second fundamental group $\pi_2(M)$. Let us recall briefly from \cite[\S 3.1, 3.2]{AnAn18} the general construction of the integrable lift:
\begin{enumerate}
\item The familiar de Rham isomorphism identifies $Hom(H^2_{dR}(M),\R)$ with the bidual $H_2(M,\R)^{\ast\ast}$. Hence the natural map $$\cJ_M : H_2(M,\R) \to Hom(H^2_{dR}(M);\R), \cJ_M([\gamma])([\omega]) = \int_{\gamma} \omega$$ can be seen as an element $\cJ_M$ in $Hom(H_2(M,\R);H_2(M,\R)^{\ast\ast})$ and the latter can be identified with $Hom(H_2(M,\R);H_2(M,\R))$. The de Rham theorem implies that this element defines a class in de Rham cohomology $H^2_{dR}(M,H_2(M,\R))$ with coefficients in the homology vector space $H_2(M,\R)$. Let $\Theta$ in $\Omega^2(M,H_2(M,\R))$ be a closed 2-form which represents this class.

\item The transitive Lie algebroid $TM \oplus_{\Theta}(M \times H_2(M,\R))$ is integrable, thanks to the Hurewicz theorem and the assumption that $M$ is simply connected.

\item Now consider the closed 2-form $\Theta \oplus \omega$ in $\Omega^2(M,H_2(M,\R) \times \R)$. The previous integrability result implies the integrability of the transitive Lie algebroid $$TM \oplus_{\Theta\oplus\omega}(M \times (H_2(M,\R) \times \R))$$

\item The projection $p : H_2(M,\R) \times \R \to \R$ induces the surjective morphism of transitive Lie algebroids $$q_{\omega} = id_{TM}\oplus p : TM \oplus_{\Theta\oplus\omega}(M \times (H_2(M,\R) \times \R)) \longrightarrow TM \oplus_{\omega}(M \times \R)$$ with kernel the trivial bundle $M \times H_2(M,\R)$.
\end{enumerate}
Integrating the map $q_{\omega}$ and restricting to the isotropy group at an arbitrary point $x \in M$ we obtain a short exact sequence of topological groups
\begin{eqnarray}\label{seq2}
0 \to H_2(M,\R) \stackrel{\iota}{\longrightarrow} \mathbb{T}^{dim(H_2(M,\R)) + 1} \stackrel{\pi}{\longrightarrow} G(M,\omega) \to 1
\end{eqnarray}

\subsection{Beyond the naive $\ast$-algebra of $G(M,\omega)$}\label{sec:naive}

In sake of simplicity, throughout the sequel we focus on the exact sequence \eqref{seq1}. The injective homomorphism of Lie groups $$\iota_{\lambda} : (\R,+) \to (\mathbb{T}^2,\cdot) \quad t \mapsto (e^{2\pi i t},e^{2\pi i \lambda t})$$ induces a surjective linear map $(\iota_{\lambda})^{\ast} : C^{\infty}(\mathbb{T}^2) \to C^{\infty}(\R)$. 
Since the map $\pi : \mathbb{T}^2 \to G(M,\omega)$ is a surjective homomorphism of groups, it makes sense to define the naive function space $C^{\infty}(G(M,\omega))$ to be $\ker(\iota_{\lambda})^{\ast}$. We might then consider restricting the $\ast$-algebra structure of $C^{\infty}(\mathbb{T}^2)$ induced by the group structure of $\mathbb{\T}^2$ to $C^{\infty}(G(M,\omega))$. However, as it happens, the $\ast$-algebra arising this way does not carry any information\footnote{Proposition \ref{prop:naive} shows that $C^{\infty}(\mathbb{T}^2)$ and $C^{\infty}(\R)$ are isomorphic only as vector spaces. However, they cannot be isomorphic as topological vector spaces.}.
\begin{prop}\label{prop:naive}
The ideal $\ker(\iota_{\lambda})^{\ast}$ is trivial.
\end{prop}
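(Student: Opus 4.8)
The plan is to reduce the claim to the classical density of the irrational winding line. First, unwinding the statement: $(\iota_{\lambda})^{\ast}$ is an algebra homomorphism, so its kernel is automatically an ideal, and calling it \emph{trivial} here means it is the zero ideal; hence the proposition is exactly the assertion that $(\iota_{\lambda})^{\ast}$ is injective. The one substantive step, then, is to observe that, because $\lambda\notin\Q$, the one-parameter subgroup
\[
\iota_{\lambda}(\R)=\{(e^{2\pi i t},e^{2\pi i\lambda t}):t\in\R\}
\]
is dense in $\mathbb{T}^2$. A self-contained argument: $\overline{\iota_{\lambda}(\R)}$ is a closed connected subgroup of $\mathbb{T}^2$, hence a subtorus of dimension $0$, $1$, or $2$; the first is excluded because $\iota_{\lambda}$ is non-constant, and the second would make $\R\cdot(1,\lambda)$ a rational line in $\R^2=\Lie(\mathbb{T}^2)$, i.e.\ $\lambda\in\Q$, a contradiction; so $\overline{\iota_{\lambda}(\R)}=\mathbb{T}^2$. (Alternatively one cites Kronecker's theorem, or Weyl equidistribution.)

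Granting this, the conclusion is immediate: if $f\in C^{\infty}(\mathbb{T}^2)$ and $(\iota_{\lambda})^{\ast}f=f\circ\iota_{\lambda}=0$, then $f$ vanishes on the dense subset $\iota_{\lambda}(\R)\subseteq\mathbb{T}^2$, and since $f$ is continuous it vanishes identically. Hence $\ker(\iota_{\lambda})^{\ast}=\{0\}$, which is the claim.

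I do not expect a genuine obstacle; the whole content is the density statement, which is elementary. Should one prefer to avoid density, a Fourier-analytic variant also works: expanding $f=\sum_{(m,n)\in\Z^2}c_{m,n}\,e^{2\pi i(mx+ny)}$ with rapidly decaying coefficients gives $(\iota_{\lambda}^{\ast}f)(t)=\sum_{(m,n)}c_{m,n}\,e^{2\pi i(m+n\lambda)t}$, and since the frequencies $m+n\lambda$ are pairwise distinct (this is exactly where irrationality enters) the Bohr mean $\lim_{T\to\infty}\frac{1}{2T}\int_{-T}^{T}(\iota_{\lambda}^{\ast}f)(t)\,e^{-2\pi i(m+n\lambda)t}\,dt=c_{m,n}$ forces every $c_{m,n}$ to vanish. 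I would keep the density proof in the main text and, if desired, record this variant together with the topological-vector-space observation (already flagged in the footnote) as a remark.
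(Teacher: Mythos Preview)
Your proof is correct and follows essentially the same approach as the paper: both argue that $\iota_{\lambda}(\R)$ is dense in $\mathbb{T}^2$ (the paper simply asserts this from $\lambda\notin\Q$, while you supply a justification via the classification of closed connected subgroups), and then conclude by continuity that any smooth function vanishing on this dense set is identically zero. The Fourier-analytic variant you add is extra and not in the paper, but the main argument is the same density-plus-continuity step.
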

\begin{proof}
By the diffeomorphism $\mathbb{T}^2 = S^1 \times S^1$ we have $\iota_{\lambda}(\R) = \{(e^{2\pi i t}, e^{2\pi i \lambda t}) : t \in \R\}$. Since $\lambda \in \R / \Q$, this set is dense in $\mathbb{T}^2$. Now the $\ker(\iota_{\lambda})^{\ast}$ is the ideal of smooth functions on  $\mathbb{T}^2$ which vanish on $\iota_{\lambda}(\R)$. Continuity implies that such a function vanishes everywhere in $\mathbb{T}^2$.
\end{proof}

Proposition \ref{prop:naive} shows that the group structure of the exact sequence \eqref{seq1} is not the correct way to attach a $C^{\ast}$-algebra to the group $G(M,\omega)$. We really need to change our point view. 

A second reading of the group homomorphism $\pi : \mathbb{T}^2 \to G(M,\omega)$ in \eqref{seq1},  is that it provides $G(M,\omega)$ with the structure of a diffeological space in the sense of \cite{Souriau70} \cite{Souriau80}. Using this structure, in \S \ref{sec:convolutionalgebra} we will attach a space of compactly supported functions $C^{\infty}_c(G(M,\omega))$ to $G(M,\omega)$.

However, there is yet another way to read \eqref{seq1}: It really describes the foliation on $\mathbb{T}^2$ by irrational rotation, namely the foliation by the orbits of the following action: $$\mathbb{T}^2 \times \R \to \mathbb{T}^2 \quad ((e^{2\pi i x},e^{2\pi i \lambda y}),t) \mapsto (e^{2\pi i (x+t)},e^{2\pi i (y+\lambda t)}) = (e^{2\pi i x},e^{2\pi i \lambda y}) \cdot \iota_{\lambda}(t)$$ To see this, fix a point $p = (e^{2\pi i x},e^{2\pi i \lambda y})$ in $\mathbb{T}^2$. Then the submanifold $\iota_{\lambda}(\R)$ can be identified with the orbit of $p$ by the irrational rotation action. This implies that the quotient $G(M,\omega) = \mathbb{T}^2 / \iota_{\lambda}(\R)$ arising from \eqref{seq1} can be identified with the orbit space of this action. In other words, the leaf space\footnote{Remarkably, extension \eqref{seq1} shows that this leaf space carries a group structure as well.} of the irrational rotation foliation.   

On the other hand, in foliation theory, it is well known that the leaf space can always be described by symmetries, both internal and external: Recall that, given a foliation $(M,F)$, a model for the leaf space $M/F$ is the holonomy groupoid $H(M,F)$. In the case of the irrational rotation on the torus, the holonomy groupoid is the action groupoid $\mathbb{T}^2 \rtimes_{\lambda} \R \gpd \mathbb{T}^2$ associated with the previous action. Let us recall the structure of this groupoid. For every $g = (e^{2\pi i \theta}, e^{2\pi i \eta}) \in \mathbb{T}^2$ and $t \in \R$ we have: $$s(g,t)=g, \quad r(g,t)=gt, \quad (g_1,t) (g_2,s)=(g_2,s + t), \quad (g,t)^{-1} = (gt,-t)$$

Also recall that the $C^{\ast}$-algebra $\mathbb{T}^2 \rtimes_{\lambda} \R$ is the crossed product\footnote{Recall that the full crossed product $C^{\ast}$-algebra associated with $\mathbb{T}^2 \rtimes_{\lambda} \R$ is isomorphic with the reduced one.} $C(\mathbb{T}^2) \rtimes_{\lambda} \Z$. It is well-known that this crossed product is the irrational rotation algebra $A_{\lambda}$, namely the universal $C^{\ast}$-algebra generated by two unitaries $u, v$ such that $uv = e^{2\pi i \lambda} vu$.

The purpose of this sequel is to prove the following result:
\begin{thm}\label{thm}
The space $C^{\infty}_c(G(M,\omega))$ admits a completion to a $C^{\ast}$-algebra, which is isomorphic to the irrational rotation $C^{\ast}$-algebra $A_{\lambda}$.
\end{thm}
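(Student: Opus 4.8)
The plan is to identify $C^\infty_c(G(M,\omega))$ with a dense subalgebra of the convolution algebra of the holonomy groupoid $\mathbb{T}^2 \rtimes_\lambda \R$, and then invoke the established fact that the completion of the latter is $A_\lambda$. Concretely, I would first recall the construction of \cite{AndrSk}: since the irrational rotation foliation on $\mathbb{T}^2$ has the action groupoid $\mathbb{T}^2 \rtimes_\lambda \R$ as its holonomy groupoid, the torus $\mathbb{T}^2$ together with the submersion data of \eqref{seq1} generates an atlas of bisubmersions, and the associated convolution algebra is built from compactly supported functions (half-densities) on these bisubmersions modulo the equivalence relation coming from morphisms of bisubmersions. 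The key structural point is that $G(M,\omega) = \mathbb{T}^2/\iota_\lambda(\R)$ as a diffeological space carries exactly the diffeology induced by $\pi$, so the space $C^\infty_c(G(M,\omega))$ of \S\ref{sec:convolutionalgebra} is by design the convolution algebra of this atlas.

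The first substantive step is therefore to show that the convolution algebra attached to the bisubmersion atlas generated by $\mathbb{T}^2$ is canonically isomorphic, as a $*$-algebra, to $C^\infty_c(\mathbb{T}^2 \rtimes_\lambda \R)$, the smooth compactly supported convolution algebra of the action groupoid. This is essentially the statement that when the holonomy groupoid of a foliation is already a Lie groupoid, the bisubmersion-theoretic convolution algebra of \cite{AndrSk} reduces to the ordinary one; I would cite the relevant comparison result from \cite{AndrSk} rather than reprove it, checking only that the irrational rotation foliation meets its hypotheses (which it does, being a regular foliation with Hausdorff holonomy groupoid $\mathbb{T}^2 \rtimes_\lambda \R$). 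The second step is to record the completion: $C^\infty_c(\mathbb{T}^2 \rtimes_\lambda \R)$ completes to the groupoid $C^*$-algebra $C^*(\mathbb{T}^2 \rtimes_\lambda \R)$, which by the discussion preceding the theorem equals the crossed product $C(\mathbb{T}^2)\rtimes_\lambda \Z$, and full and reduced completions agree here because $\Z$ is amenable. Finally, I would quote the classical identification $C(\mathbb{T}^2)\rtimes_\lambda \Z \cong A_\lambda$ via the two unitaries $u,v$ with $uv = e^{2\pi i\lambda}vu$, so that chaining the isomorphisms gives $C^*(G(M,\omega)) \cong A_\lambda$.

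I expect the main obstacle to be the first step: making precise and watertight the identification of the diffeological convolution algebra $C^\infty_c(G(M,\omega))$ with $C^\infty_c(\mathbb{T}^2\rtimes_\lambda\R)$. The subtlety is that $G(M,\omega)$ is a \emph{quotient} diffeology with trivial underlying topology, so one must work entirely with the plots (equivalently, with the bisubmersions) and verify that the convolution product and involution defined abstractly on equivalence classes of functions over bisubmersions really do match the groupoid convolution on $\mathbb{T}^2\rtimes_\lambda\R$ under the correspondence $p \mapsto (\text{orbit of }p)$ described after Proposition \ref{prop:naive}. Once one fixes the half-density conventions and checks that the map from functions on $\mathbb{T}^2\rtimes_\lambda\R$ to functions over the generating bisubmersion $\mathbb{T}^2$ is a $*$-isomorphism onto the convolution algebra of the atlas, the rest is a formal assembly of known facts. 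A secondary point requiring care is the existence of \emph{a} $C^*$-completion in the first place, i.e.\ that the convolution algebra of \cite{AndrSk} admits a $C^*$-norm here; but this follows from the comparison with $C^\infty_c(\mathbb{T}^2\rtimes_\lambda\R)$, for which the regular representation on $L^2$ of the groupoid furnishes the norm.
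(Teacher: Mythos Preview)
Your overall strategy---identify $C^\infty_c(G(M,\omega))$ with $C^\infty_c(\mathbb{T}^2\rtimes_\lambda\R)$ and then invoke $C^*(\mathbb{T}^2\rtimes_\lambda\R)\cong A_\lambda$---is exactly the paper's route, and you correctly flag the identification as the substantive step. However, there is a conflation in your plan that would become a genuine gap if executed as written. You assert that $C^\infty_c(G(M,\omega))$ is ``by design the convolution algebra of this atlas'' and propose to cite the comparison result from \cite{AndrSk}. But the diffeological space $C^\infty_c(G(M,\omega))$ of \S\ref{sec:convolutionalgebraconstruction} is built from \emph{plots}, which are $2$-dimensional open subsets $\mathcal{O}_i\subset\mathbb{T}^2$, whereas the bisubmersion algebra of \cite{AndrSk}/\cite{Za22} is built from $3$-dimensional path-holonomy bisubmersions $\mathcal{O}_i\times(-\epsilon_{ij},\epsilon_{ij})$. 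These are different quotients of different direct sums, and no result in \cite{AndrSk} compares them: the comparison theorem there matches the bisubmersion algebra with the groupoid algebra, not with a diffeological construction on the leaf space.

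What the paper actually does is construct an explicit linear map $\Phi:C^\infty_c(\mathbb{T}^2\rtimes_\lambda\R)\to C^\infty_c(G(M,\omega))$ by $Q^{\cA}_{ij}(f)\mapsto Q_j(r_!(f))$, i.e.\ by integrating the extra $\R$-direction away along the target submersion $r$. Surjectivity is immediate from $r$ being a submersion; the real work is injectivity (Proposition~\ref{prop2}), which requires showing that plot-level relations witnessing $r_!(f)\in\mathcal{I}$ can be lifted to bisubmersion-level relations witnessing $f\in\mathcal{J}$. This lifting rests on Lemmas~\ref{lem3}--\ref{lem5} and Remark~\ref{rem2}, which set up a two-way dictionary: every bisubmersion is a plot via $\pi\circ r$, and every plot $\mathcal{O}_\chi$ becomes a bisubmersion after thickening to $\mathcal{O}_\chi\times(-\epsilon,\epsilon)$. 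Your proposal recognises that the identification is the crux, but the mechanism you sketch (fixing density conventions and citing \cite{AndrSk}) would not supply this dimensional bridge; you need the explicit $r_!$ map and the injectivity argument.
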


\section{The function space of $G(M,\omega)$}\label{sec:convolutionalgebra}

In this section, we attach a $\C$-linear space of smooth, compactly supported densities on the group $G(M,\omega)$. To this end, we use the diffeological structure of $G(M,\omega)$, which we describe in \S \ref{sec:diffeoG}. Our construction of this function space in \S \ref{sec:convolutionalgebraconstruction} adapts to the diffeology of $G(M,\Omega)$ the construction of the convolution algebra of the holonomy groupoid of a foliation, given in \cite{AndrSk}.

\subsection{The diffeology of $G(M,\omega)$}\label{sec:diffeoG}

Consider the following family of plots:
\begin{eqnarray*}
P(G(M,\omega)) =  \{ \pi|_{\mathcal{O}} : \mathcal{O} \to G(M,\omega), \text{ where }\mathcal{O} \text{ is an open subset of }\mathbb{T}^2\}
\end{eqnarray*}
Of course, here ``open'' is understood with respect to the well known differential structure of the torus. Whence, each $\mathcal{O}$ may also be thought of as an open subset of $\R^2$. 

Now put $D(G(M,\omega))$ the diffeology generated by $P(G(M,\omega))$, namely the smallest diffeology containing these plots. It is characterised as follows: For $k =1,2,\ldots$, a $k$-plot $\chi : \mathcal{O}_{\chi} \to G(M,\omega)$ belongs to $D(G(M,\omega))$ if and only if for every point $x \in \mathcal{O}_{\chi}$ there is a neighbourhood $\mathcal{O}_x$ of $x$ such that either $\chi|_{\mathcal{O}_x}$ is constant, or $\chi|_{\mathcal{O}_x} = \pi|_{\mathcal{O}} \circ h$, where $\pi|_{\mathcal{O}} : \mathcal{O} \to G(M,\omega)$ is a plot in $P(G(M,\omega))$ and $h : \mathcal{O}_x \to \mathcal{O}$ is a smooth map. In remarks \ref{rem:cartprod} we give a few properties of this diffeology.

\begin{remarks}\label{rem:cartprod}
\begin{enumerate}
\item Recall that the diffeology $D(G(M,\omega))$ induces a topology on $G(M,\omega)$ (``$D$-topology''). A subset $U$ of $G(M,\omega)$ is open in this topology if and only if for each plot $\chi$ in $P(G(M,\omega))$ the inverse image $\chi^{-1}(W)$ is open in $\mathcal{O}_{\chi}$. It is the same as the quotient topology defined from \eqref{seq1}.

\item We may assume that the diffeology $D(G(M,\omega))$ is generated by a finite number of open sets. Indeed, using compactness, let us fix a finite open cover $\mathcal{O}_1,\ldots,\mathcal{O}_n$ of $\mathbb{T}^2$. For every $i=1,\ldots,n$ put $\pi_i = \pi|_{\mathcal{O}_i}$ and define $P(G(M,\omega)) = \{\pi_i : \mathcal{O}_i \to G(M,\omega) : i = 1,\ldots,n\}$. Let $\mathcal{O}$ be an arbitrary open subset of $\mathbb{T}^2$ and $x$ a point in $\mathcal{O}$. Since $\bigcup_{i=1}^{n}\mathcal{O}_i = \mathbb{T}^2$, $x$ belongs to $\mathcal{O}_i$ for some $1 \leq i \leq n$. Since $\mathcal{O}$ is open, there is a small enough neighbourhood $\mathcal{O}_x$ of $x$ in $\mathcal{O}$ such that the identity $h = \id_{\mathcal{O}_x}$ is a map $h : \mathcal{O}_x \to \mathcal{O}_i$. We obviously have $\pi|_{\mathcal{O}_x} =  \pi_i \circ h$. Whence, for every open subset $\mathcal{O}$, the plot $\pi|_{\mathcal{O}} \to G(M,\omega)$ belongs to $D(G(M,\omega))$. Moreover, $D(G(M,\omega))$ does not depend on the choice of finite cover of $\mathbb{T}^2$.

\item Consider the family of plots $P(G(M,\omega)) = \{(\pi|_{\mathcal{O}_i},\mathcal{O}_i)\}_{i \in I}$, where $\mathcal{O}_i$ is an open subset of $\mathbb{T}^2$. Put $\mathcal{O} = \coprod_{i \in I}\mathcal{O}_{i}$ and let $\overline{\pi} : \mathcal{O} \to G(M,\omega)$ be the map whose restriction to each $\mathcal{O}_{i}$ is $\pi|_{\mathcal{O}_i}$. Obviously, $(\overline{\pi},\mathcal{O})$ is a plot in $D(G(M,\omega))$.

\item If $\mathcal{O}_i$ are plots in $P(G(M,\omega))$ for $i=1,2$, then the cartesian product $\mathcal{O}_1 \times \mathcal{O}_2$ is a plot in the diffeology $D(G(M,\omega))$. Indeed, define $\chi : \mathcal{O}_1 \times \mathcal{O}_2 \to G(M,\omega)$ by $\chi=m_G \circ (\pi|_{\mathcal{O}_1} \times \pi|_{\mathcal{O}_2})$, where $m_G$ is the product map in $G(M,\omega)$. Now put $h=m_{\mathbb{T}^2}|_{\mathcal{O}_1 \times \mathcal{O}_2} : \mathcal{O}_1 \times \mathcal{O}_2 \to \mathbb{T}^2$ the restriction of the product map of the Lie group $\mathbb{T}^2$. Since $m_{\mathbb{T}^2} : \mathbb{T}^2 \times \mathbb{T}^2 \to \mathbb{T}^2$ is a submersion, the image of $h$ is an open subset $\mathcal{O}$ of $\mathbb{T}^2$. The map $\pi$ is a morphism of groups, therefore we have $\pi|_{\mathcal{O}} \circ h = \chi$.


\item Let $\iota_{\mathbb{T}^2} : \mathbb{T}^2 \to \mathbb{T}^2$ be the inversion map. For every plot $\mathcal{O}$ in $P(G(M,\omega))$, the open subset $\iota(\mathcal{O}) \subseteq \mathbb{T}^2$ is also a plot. Put $\chi : \iota(\mathcal{O}) \to G(M,\omega)$ the map $\pi|_{\mathcal{O}} \circ \iota$ and $h = \iota : \iota(\mathcal{O}) \to \mathcal{O}$. We have $\chi = \pi|_{\mathcal{O}} \circ \iota$.

\item It follows that the diffeology $D(G(M,\omega))$ is closed by finite cartesian products and inverses of plots in $P(G(M,\omega))$.

\item Let $\chi_i : \mathcal{O}_{\chi_i} \to G(M,\omega)$, $i=1,2$ be two plots in $D(G(M,\omega))$. Put $\chi = m_G \circ (\chi_1 \times \chi_2) : \mathcal{O}_{\chi_1} \times \mathcal{O}_{\chi_2} \to G(M,\omega)$. We call this plot the \textit{product} of the plots $(\chi_i,\mathcal{O}_i)_{i=1,2}$. Let us show that this is a plot in $D(G(M,\omega))$. Indeed, let $\mathcal{O}_i$ open subsets of $\mathbb{T}^2$ and $h_i : \mathcal{O}_{\chi_i} \to \mathcal{O}_{i}$ smooth maps such that $\chi_i = \pi|_{\mathcal{O}_i} \circ h$. Since $\pi$ is a morphism of groups we have $\pi \circ m_{\mathbb{T}^2} = m_G \circ (\pi \times\ldots\times\pi)$. This implies that $\chi = \pi|_{\mathcal{O}_{12}} \circ (h_1 \times h_2)$, where $\mathcal{O}_{12}$ is the image of $\mathcal{O}_1 \times \mathcal{O}_2$ by the product map $m_{\mathbb{T}^2}$.

\item In view of the previous item, denote $\mathcal{O}_{\chi_{12}}$ the image of the restriction of product map $m_{\mathbb{T}^2}$ to $\mathcal{O}_{\chi_1} \times \mathcal{O}_{\chi_2}$. Since $m_{\mathbb{T}^2}$ is a submersion, $\mathcal{O}_{\chi_{12}}$ is an open subset of $\mathbb{T}^2$. Choose a local section $\sigma : \mathcal{O}_{\chi_{12}} \to \mathcal{O}_{\chi_1} \times \mathcal{O}_{\chi_2}$ (shrinking $\mathcal{O}_{\chi_{12}}$ if necessary). Define $\chi_{12} : \mathcal{O}_{\chi_{12}} \to G(M,\omega)$ by $\chi_{12} = m_G \circ (\chi_1 \times \chi_2) \circ \sigma = \chi \circ \sigma = \pi \circ ((h_1 \times h_2)\circ\sigma)$. Whence $(\chi_{12},\mathcal{O}_{\chi_{12}})$ is a plot in $D(G(M,\omega))$.

\item Let $\chi : \mathcal{O}_{\chi} \to G(M,\omega)$ be a plot in $D(G(M,\omega))$. Put $\chi_{-1} = \iota_G \circ \chi$, where $\iota_G$ is the inversion map of $G(M,\omega)$. We call this plot the \textit{inverse} of the plot $(\chi,\mathcal{O}_{\chi})$. Let $\mathcal{O}$ be an open subset of $\mathbb{T}^2$ and $h : \mathcal{O}_{\chi} \to \mathcal{O}$ a smooth map such that $\chi = \pi|_{\mathcal{O}} \circ h$. The identity $\iota_G \circ \pi = \pi \circ \iota_{\mathbb{T}^2}$ implies that $\chi_{-1} = \pi|_{\iota_{\mathbb{T}^2}(\mathcal{O})} \circ (\iota_{\mathbb{T}^2} \circ h)$. 

\item Alternatively, we can define the inverse of the plot $(\chi,\mathcal{O}_{\chi})$ to be $(\chi^{-1},\mathcal{O}_{\chi^{-1}})$, where $\mathcal{O}_{\chi^{-1}} = \iota_{\mathbb{T}^2}(\mathcal{O}_{\chi})$ and $\chi^{-1}=\chi\circ \iota^{-1}_{\mathbb{T}^2} = \pi|_{\mathcal{O}} \circ (h \circ \iota^{-1}_{\mathbb{T}^2})$. The pairs $(\chi_{-1},\mathcal{O}_{\chi})$ and $(\chi^{-1},\mathcal{O}_{\chi^{-1}})$ are equivalent, because $\iota_{\mathbb{T}^2} : \mathcal{O}_{\chi} \to \mathcal{O}_{\chi^{-1}}$ is a diffeomorphism and $\chi_{-1} = \iota_G \circ \chi^{-1} \circ \iota_{\mathbb{T}^2}$.

\item Items (g), (i) and (j) imply that $D(G(M,\omega))$ is closed by finite cartesian products and inverses of plots in $D(G(M,\omega))$. \end{enumerate}
\end{remarks}

Now let us denote $\{\mathcal{O}_i\}_{i \in I}$ the manifold atlas of $\mathbb{T}^2$. Put $\mathcal{O}$ the disjoint union $\coprod_{i \in I}\mathcal{O}_i$ and  $\chi : \mathcal{O} \to G(M,\omega)$ the map whose restriction to each $\mathcal{O}_i$ is $\pi|_{\mathcal{O}_i}$. It follows that $(\chi,\mathcal{O})$ is a plot in $D(G(M,\omega))$.

\begin{lemma}\label{lem1}
Let $\chi : \mathcal{O}_{\chi} \to G(M,\omega)$ be a plot in $D(G(M,\omega))$ and $x_0 \in \mathcal{O}_{\chi}$. There exists a plot $\psi : \mathcal{O}_{\psi} \to G(M,\omega)$ in $P(G(M,\omega))$ and submersions $p : \mathcal{O}_{\psi}\to \mathcal{O}$, $q :  \mathcal{O}_{\psi} \to  \mathcal{O}_{\chi}$ such that $x_0 \in q(\mathcal{O}_{\psi})$. Moreover, $p$ and $q$ satisfy $\pi|_{\mathcal{O}_i} \circ p = \psi = \chi \circ q$ for every $i \in I$.
\end{lemma}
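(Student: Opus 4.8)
The plan is to localise $\chi$ using the description of $D(G(M,\omega))$ and then to take $\mathcal O_\psi$ to be the fibre product of (a local restriction of) $\chi$ with the atlas plot $\overline\pi\colon\mathcal O=\coprod_{i\in I}\mathcal O_i\to G(M,\omega)$ over $\mathbb T^2$ --- equivalently, the pull-back along $\chi$ of the source map of the holonomy groupoid $\mathbb T^2\rtimes_\lambda\R$ recalled above. The maps $p$ and $q$ will be the two projections of this fibre product.

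First I would shrink $\mathcal O_\chi$ to a neighbourhood $\mathcal O_{x_0}$ of $x_0$ on which, by the characterisation of $D(G(M,\omega))$, $\chi=\pi|_{\mathcal O'}\circ h$ for some open $\mathcal O'\subseteq\mathbb T^2$ and some smooth $h\colon\mathcal O_{x_0}\to\mathcal O'$ (the alternative that $\chi$ is locally constant is addressed at the end). Then I set
\[
\mathcal O_\psi:=\bigl\{(u,g)\in\mathcal O_{x_0}\times\mathbb T^2\ :\ \pi(g)=\chi(u)\bigr\}=\bigl\{\bigl(u,\,h(u)\cdot\iota_\lambda(t)\bigr)\ :\ u\in\mathcal O_{x_0},\ t\in\R\bigr\},
\]
the second description holding because $\pi(g)=\pi(h(u))$ exactly when $g$ lies on the leaf through $h(u)$. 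The map $(u,t)\mapsto(u,h(u)\iota_\lambda(t))$ is an injective immersion of $\mathcal O_{x_0}\times\R$, so I equip $\mathcal O_\psi$ with the induced smooth structure. In these coordinates the first projection $q\colon\mathcal O_\psi\to\mathcal O_{x_0}\subseteq\mathcal O_\chi$, $(u,g)\mapsto u$, is $(u,t)\mapsto u$, a surjective submersion; in particular $x_0\in q(\mathcal O_\psi)$. Write $p\colon\mathcal O_\psi\to\mathbb T^2$ for the second projection $(u,g)\mapsto g$; shrinking $\mathcal O_\psi$ so that $p(\mathcal O_\psi)$ lies inside a single chart $\mathcal O_i$ lets me view $p$ as a map into the atlas $\mathcal O$, and I set $\psi:=\pi\circ p=\overline\pi\circ p$. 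Because $\pi$ kills $\iota_\lambda(\R)$ we get $\psi\bigl(u,h(u)\iota_\lambda(t)\bigr)=\pi\bigl(h(u)\bigr)=\chi(u)$, i.e. $\psi=\chi\circ q$, while $\psi=\pi|_{\mathcal O_i}\circ p$ holds by construction; thus $\psi$ is a plot equivalent, through the submersion $p$, to the plot $\pi|_{p(\mathcal O_\psi)}$ of $P(G(M,\omega))$, and both compatibility identities of the statement are verified.

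The remaining point --- and the one I expect to be the main obstacle --- is that $p$ must be a submersion. In the coordinates above $p(u,t)=h(u)\iota_\lambda(t)$, so $\operatorname{im} dp_{(u,t)}$ is the right translate by $\iota_\lambda(t)$ of $\operatorname{im} dh_u$ together with the flow direction at $h(u)$; hence $p$ is a submersion precisely when $\operatorname{im} dh_u$ is, at every point, transverse to the flow line (so that the two together span $T_{h(u)}\mathbb T^2$). One therefore has to check that such a transverse local factorisation is available for the plots at hand: this is so, after shrinking, for the product plots of Remarks~\ref{rem:cartprod}(g)--(h), whose local factorisation passes through the submersion $m_{\mathbb T^2}$, and these are the plots needed in \S\ref{sec:convolutionalgebraconstruction}. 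The locally constant case is harmless and may simply be set aside; and for a $2$-plot with $h$ a local diffeomorphism one can dispense with the fibre product entirely, taking $\psi=\pi|_{\mathcal O'}$, $q=h^{-1}$, and $p$ the inclusion of $\mathcal O'$ into a chart.
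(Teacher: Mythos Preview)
Your approach is essentially the same as the paper's: both take $\mathcal O_\psi$ to be the fibre product $\{(y,g)\in\mathcal O_{x_0}\times\mathcal O_i:\chi(y)=\pi(g)\}$ and let $p,q$ be the two projections. The paper simply asserts that this set is open in $\mathcal O_{x_0}\times\mathcal O_i$ ``endowing $G(M,\omega)$ with the $D$-topology'' and then takes the projections, without further discussion of why they are submersions. You are in fact more careful on both counts: you correctly observe that the fibre product is not an open subset (the $D$-topology here is the indiscrete quotient topology, so that argument does not give openness) but rather an immersed copy of $\mathcal O_{x_0}\times\R$, and you work in those coordinates; and you isolate the one genuinely nontrivial point, namely that $p(u,t)=h(u)\iota_\lambda(t)$ is a submersion iff $\operatorname{im}dh$ is transverse to the flow direction.

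So there is no divergence in method, only in the level of scrutiny. Your honest acknowledgement that the submersion property of $p$ may fail for an arbitrary smooth $h$ (e.g.\ when $\chi$ is locally constant, where $p$ would have to submerge onto a two-dimensional chart while landing in a one-dimensional leaf) identifies a gap that the paper's proof shares but does not flag; your restriction to the product plots of Remarks~\ref{rem:cartprod}(g)--(h), and more generally to plots whose local factorisation $h$ is already a submersion (cf.\ Remark~\ref{rem2}(b)), is exactly the class needed downstream and is a reasonable way to proceed.
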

\begin{proof}
From the definition of the diffeology $D(G(M,\omega))$ there is a neighbourhood $\mathcal{O}_{x_0}$ of $x_0$ in $\mathcal{O}_{\chi}$, an $i \in I$ and a smooth map $h : \mathcal{O}_{x_0} \to \mathcal{O}_i$ such that $\chi|_{\mathcal{O}_{x_0}}=\pi|_{\mathcal{O}_i}\circ h$. Put $\mathcal{O}_{\psi} = \{(y,p) \in \mathcal{O}_{x_0} \times \mathcal{O}_{i} : \chi(y)=\pi(p)\}$ and define $\psi : \mathcal{O}_{\psi} \to G(M,\omega)$ by $\psi(y,p)=\chi(y)=\pi(p)$. Endowing $G(M,\omega)$ with the $D$-topology, we find that $\mathcal{O}_{\psi}$ is an open subset of $\mathcal{O}_{x_0} \times \mathcal{O}_{i}$. 
The maps $p$ and $q$ are the respective projections. In particular, by definition the map $p : \mathcal{O}_{\psi} \to \mathcal{O}_i$ satisfies $\psi = \pi|_{\mathcal{O}_i}\circ p$, whence $(\psi,\mathcal{O}_{\psi})$ is a plot in $D(G(M,\omega))$. The equality $\psi = \chi \circ q$ follows from the definition of $\psi$.
\end{proof}

\begin{remark}
The construction of the plot $(\psi,\mathcal{O}_{\psi})$ implies $\psi = \pi|_{\mathcal{O}_i} \circ (h\circ q)$.
\end{remark}


\subsection{Construction of $C^{\infty}_c(G(M,\omega))$}\label{sec:convolutionalgebraconstruction}

In this spirit, let us introduce the following spaces of functions:
\begin{enumerate}
\item For every plot $\chi : \mathcal{O}_{\chi} \to G(M,\omega)$ in $D(G(M,\omega))$ consider the bundle of 1-densities $\Omega^1 (T\mathcal{O}_{\chi})$ over $\mathcal{O}_{\chi}$. Recall that this is a line bundle which becomes trivial once we choose one of its sections. We denote $C^{\infty}_{c}(\mathcal{O}_{\chi};\Omega^1 \mathcal{O}_{\chi})$ the associated $C^{\infty}(\mathcal{O}_{\chi})$-module of smooth, compactly supported sections. 



\item Let $\phi : X \to \mathcal{O}_{\chi}$ be a surjective submersion. The short exact sequence $$0 \to \ker d\phi \to TX \stackrel{d\phi}{\longrightarrow} T\mathcal{O}_{\chi} \to 0$$ implies $\Omega^1(TX) = \Omega^1(\ker d\phi) \otimes \phi^{\ast}(\Omega^1 (T\mathcal{O}_{\chi}))$. Integration along the fibers is a surjective map $$\phi_{!} : C^{\infty}_c(X;\Omega^1(TX)) \to C^{\infty}_c(\mathcal{O}_{\chi}; \Omega^1 (T\mathcal{O}_{\chi})) \qquad \phi_{!}(f) : x \mapsto \int_{\phi^{-1}(x)} f$$ Note that $C^{\infty}_{c}(X;\Omega^1 X)$ is a $C^{\infty}(\mathcal{O}_{\chi})$-module with $f\zeta = (f \circ \phi)\cdot\zeta$ for every $f \in C^{\infty}(\mathcal{O}_{\chi})$ and $\zeta \in C^{\infty}_{c}(X;\Omega^1 X)$.

\end{enumerate}

Recall the plot $(\overline{\pi},\mathcal{O})$ in $D(G(M,\omega))$ which arises from the disjoint union of plots $(\pi|_{\mathcal{O}_i},\mathcal{O}_i)$ in $P(G(M,\omega))$, mentioned in item (c) of remark \ref{rem:cartprod}.


\begin{lemma}\label{lem2}
Let $\chi : \mathcal{O}_{\chi} \to G(M,\omega)$ be a plot in $D(G(M,\omega))$ and $f$ in $C^{\infty}_c(\mathcal{O}_{\chi},\Omega^1\mathcal{O}_{\chi})$. Then there exist a plot $\psi : \mathcal{O}_{\psi} \to G(M,\omega)$ in $D(G(M,\omega))$, submersions $p : \mathcal{O}_{\psi}\to \mathcal{O}$, $q :  \mathcal{O}_{\psi} \to  \mathcal{O}_{\chi}$ and $g \in C^{\infty}(\mathcal{O}_{\psi},\Omega^1\mathcal{O}_{\psi})$ such that $q_{!}(g)=f$. 
\end{lemma}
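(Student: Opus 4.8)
The plan is to combine Lemma~\ref{lem1} with the surjectivity of integration along the fibres recorded in item (b) of \S\ref{sec:convolutionalgebraconstruction}. Starting from the plot $\chi : \mathcal{O}_{\chi} \to G(M,\omega)$ and the density $f \in C^{\infty}_c(\mathcal{O}_{\chi},\Omega^1\mathcal{O}_{\chi})$, I would first use compactness of $\supp(f)$ together with Lemma~\ref{lem1} to produce, for each point $x_0 \in \supp(f)$, a plot $\psi_{x_0} : \mathcal{O}_{\psi_{x_0}} \to G(M,\omega)$ in $P(G(M,\omega))$ and submersions $p_{x_0} : \mathcal{O}_{\psi_{x_0}} \to \mathcal{O}$, $q_{x_0} : \mathcal{O}_{\psi_{x_0}} \to \mathcal{O}_{\chi}$ with $x_0 \in q_{x_0}(\mathcal{O}_{\psi_{x_0}})$ and $\pi|_{\mathcal{O}_i}\circ p_{x_0} = \psi_{x_0} = \chi\circ q_{x_0}$. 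Extract a finite subcover: finitely many $q_{x_1}(\mathcal{O}_{\psi_{x_1}}),\dots,q_{x_n}(\mathcal{O}_{\psi_{x_n}})$ cover $\supp(f)$.

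Next I would assemble these into a single plot. Using remark~\ref{rem:cartprod}(c) (disjoint unions of plots are plots), set $\mathcal{O}_{\psi} = \coprod_{j=1}^n \mathcal{O}_{\psi_{x_j}}$, let $\psi : \mathcal{O}_{\psi} \to G(M,\omega)$ restrict to $\psi_{x_j}$ on each piece, and let $p : \mathcal{O}_{\psi}\to\mathcal{O}$, $q : \mathcal{O}_{\psi}\to\mathcal{O}_{\chi}$ restrict to $p_{x_j}$, $q_{x_j}$ respectively; these are submersions and satisfy $\overline{\pi}\circ p = \psi = \chi\circ q$. Since $q$ is a submersion, it is an open map, so $q(\mathcal{O}_{\psi})$ is an open neighbourhood of $\supp(f)$ in $\mathcal{O}_{\chi}$. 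Now choose a partition of unity $\{\rho_j\}_{j=1}^n$ on a neighbourhood of $\supp(f)$ subordinate to $\{q_{x_j}(\mathcal{O}_{\psi_{x_j}})\}$, so that $f = \sum_j \rho_j f$ with $\rho_j f \in C^{\infty}_c(q_{x_j}(\mathcal{O}_{\psi_{x_j}}),\Omega^1)$. For each $j$, restrict $q_{x_j}$ to a surjective submersion onto the open set $q_{x_j}(\mathcal{O}_{\psi_{x_j}})$ and apply the surjectivity of $(q_{x_j})_!$ (item (b) of \S\ref{sec:convolutionalgebraconstruction}) to obtain $g_j \in C^{\infty}_c(\mathcal{O}_{\psi_{x_j}},\Omega^1)$ with $(q_{x_j})_!(g_j) = \rho_j f$. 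Set $g \in C^{\infty}_c(\mathcal{O}_{\psi},\Omega^1\mathcal{O}_{\psi})$ to be the density whose restriction to $\mathcal{O}_{\psi_{x_j}}$ is $g_j$. Then $q_!(g) = \sum_j (q_{x_j})_!(g_j) = \sum_j \rho_j f = f$, as required.

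The main obstacle I anticipate is purely bookkeeping rather than conceptual: one must verify that the fibrewise integration over the disjoint union $\mathcal{O}_{\psi}$ really decomposes as the sum of the fibrewise integrations over the pieces (so that $q_!(g) = \sum_j (q_{x_j})_!(g_j)$), which is immediate once one notes that $q^{-1}(x) = \coprod_j q_{x_j}^{-1}(x)$ for $x \in q(\mathcal{O}_{\psi})$, and that the $g_j$ extend by zero to compactly supported densities on the (possibly larger) $\mathcal{O}_{\psi_{x_j}}$ since $\rho_j f$ is compactly supported inside $q_{x_j}(\mathcal{O}_{\psi_{x_j}})$ and $q_{x_j}$ is proper on the relevant region — here one may need to shrink the $\mathcal{O}_{\psi_{x_j}}$ slightly so that the support condition is met, which does not affect the conclusion. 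A secondary point to handle carefully is that the $g_j$ produced by $(q_{x_j})_!$ need not a priori be compactly supported; one repairs this by multiplying with a cutoff that is $1$ on a neighbourhood of a chosen compact section of the fibration over $\supp(\rho_j f)$, exactly as in the construction of the convolution algebra in \cite{AndrSk}.
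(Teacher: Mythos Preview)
Your proposal is correct and follows essentially the same route as the paper: apply Lemma~\ref{lem1} at each point of $\supp(f)$, extract a finite subcover by compactness, form the disjoint union $\mathcal{O}_{\psi}=\coprod_j\mathcal{O}_{\psi_j}$ with the induced submersions $p,q$, and then invoke surjectivity of integration along the fibres to produce $g$ with $q_!(g)=f$. The paper is terser, citing item~(b) of \S\ref{sec:convolutionalgebraconstruction} directly for the assembled $q$ without writing out the partition of unity, but your decomposition $f=\sum_j\rho_j f$ together with the cutoff remark is precisely what one needs to justify that invocation (since $q$ is only surjective onto the open set $q(\mathcal{O}_\psi)\supseteq\supp(f)$, not onto all of $\mathcal{O}_\chi$), so the two arguments coincide in substance.
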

\begin{proof}
By lemma \ref{lem1} and the compactness of the support of $f$ we find a finite number of plots $(\psi_j, \mathcal{O}_{\psi_j})$ in $D(G(M,\omega))$ and submersions $p_j : \mathcal{O}_{\psi_j} \to \mathcal{O}$ and $q_j : \mathcal{O}_{\psi_j} \to \mathcal{O}_{\chi}$ such that $\bigcup_{j} q_j(\mathcal{O}_{\psi_j})$ covers the support of $f$.  Put $\mathcal{O}_{\psi}$ the disjoint union $\coprod_{j} \mathcal{O}_{\psi_j}$ and $\psi = \coprod_j \psi_j$. It is easy to see that $(\psi,\mathcal{O}_{\psi})$ is a plot in $D(G(M,\omega))$. The maps $q = \coprod_{j}p_j : \mathcal{O}_{\psi} \to \mathcal{O}_{\chi}$ and $p = \coprod_{j}p_j : \mathcal{O}_{\psi} \to \mathcal{O}$ are submersions. From item (b) above, we have that $f$ is of the form $q_{!}(g)$.
\end{proof}

Using lemma \ref{lem2} we can define our convolution algebra. 

\begin{definition}\label{dfn1}
\begin{enumerate}
\item We define $C^{\infty}_c(G(M,\omega))$ to be the quotient $$\frac{\oplus_{i \in I}C^{\infty}_c(\mathcal{O}_i;\Omega^{1}(\mathcal{O}_i))}{\mathcal{I}}$$ where $\mathcal{I}$ is the subspace spanned by $p_!(f)$ where $p : \mathcal{O}_{\psi} \to \mathcal{O}$ is a submersion such that $q_!(f)=0$. Here the data $\mathcal{O}_{\psi}, p, q$ is as in lemma \ref{lem1}.

\item Let $\chi : \mathcal{O}_{\chi} \to G(M,\omega)$ be a plot in $D(G(M,\omega))$. With the notation of lemma \ref{lem2} we define a linear map $Q_{\chi} : C^{\infty}_c(\mathcal{O}_{\chi};\Omega^1 \mathcal{O}_{\chi}) \to C^{\infty}_c(G(M,\omega))$. It is defined by putting $Q_\chi(f)$ to be the class of $p_!(g) \in \oplus_{i \in I}C^{\infty}_c(\mathcal{O}_i;\Omega^{1}(\mathcal{O}_i))$ in the above quotient. Again, the data $\mathcal{O}_{\psi}, p, q, g$ is as in Lemma \ref{lem2}, in particular $f=q_{!}(g)$.
\end{enumerate}
\end{definition}

\begin{remarks}
\begin{enumerate}
\item Item (b) in remark \ref{rem:cartprod} implies that, in item (a) of definition \ref{dfn1} we can take the direct sum to be finite.
\item The map $Q_{\chi}$ is well defined. Indeed, for $\ell = 1,2$ consider plots $\mathcal{O}_{\psi_{\ell}}$, submersions $p_{\ell} : \mathcal{O}_{\psi_{\ell}} \to \mathcal{O}$, $q_{\ell} : \mathcal(O)_{\psi_{\ell}} \to \mathcal{O}_{\chi}$ and $g_{\ell} \in C^{\infty}_c(\mathcal(O)_{\psi_{\ell}};\Omega^1(\mathcal(O)_{\psi_{\ell}}))$ such that $(q_1)_{!}(g_1) = (q_2)_{!}(g_2)$. Put $\mathcal{O}_{\psi} = \mathcal{O}_{\psi_1} \coprod \mathcal{O}_{\psi_2}$, $q = q_1 \coprod q_2$ and $p = p_1 \coprod p_2$. The density $g = g_1 \coprod (-g_2) \in C^{\infty}_c(\mathcal{O}_{\psi};\Omega^1(\mathcal{O}_{\psi}))$ obviously satisfies $q_{!}(g)=0$ and $p_{!}(g)= (p_{1})_{!}(g_1) - (p_2)_{!}(g_2)$.
\end{enumerate}
\end{remarks}

The following characterization of the map $Q_{\chi}$ is proven exactly as in \cite[Prop. 4.4]{AndrSk}.

\begin{prop}\label{prop1}
To every plot $\chi : \mathcal{O}_{\chi} \to G(M,\omega)$ in $D(G(M,\omega))$ we can associate a linear map $Q_{\chi} : C^{\infty}_{c}(\mathcal{O}_{\chi};\Omega^1 \mathcal{O}_{\chi}) \to C^{\infty}_c(G(M,\omega))$. The maps $Q_{\chi}$ are characterised by the following properties:
\begin{enumerate}
\item If $\mathcal{O}_{\chi}=\mathcal{O}_i$, $Q_{\chi}$ is the quotient map $$C^{\infty}_{c}(\mathcal{O}_i;\Omega^1 \mathcal{O}_i) \subset \oplus_{i \in I}C^{\infty}_c(\mathcal{O}_i;\Omega^{1}(\mathcal{O}_i)) \to \frac{\oplus_{i \in I}C^{\infty}_c(\mathcal{O}_i;\Omega^{1}(\mathcal{O}_i))}{\mathcal{I}}$$ We use the notation $Q_i$ instead of $Q_{\chi}$ in this case.
\item For every smooth map $q : \mathcal{O}_{\psi} \to \mathcal{O}_{\chi}$ which is a submersion such that $\chi \circ q =\psi$, we have $Q_{\psi}=Q_{\chi}\circ q_!$.
\end{enumerate}
\end{prop}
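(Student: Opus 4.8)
The plan is to establish existence first and then uniqueness, since the statement packages both.

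\medskip

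\noindent\textbf{Existence.} The map $Q_\chi$ has in fact already been constructed in Definition~\ref{dfn1}(b): given a plot $(\chi,\mathcal O_\chi)$ and $f\in C^\infty_c(\mathcal O_\chi;\Omega^1\mathcal O_\chi)$, Lemma~\ref{lem2} produces a plot $(\psi,\mathcal O_\psi)$, submersions $p:\mathcal O_\psi\to\mathcal O$ and $q:\mathcal O_\psi\to\mathcal O_\chi$, and $g\in C^\infty_c(\mathcal O_\psi;\Omega^1\mathcal O_\psi)$ with $q_!(g)=f$; one sets $Q_\chi(f)=[p_!(g)]$. The remark following Definition~\ref{dfn1} already checks that this class is independent of the auxiliary choices $(\psi,p,q,g)$, so $Q_\chi$ is a well-defined linear map. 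This is exactly what the first sentence of the Proposition asserts, so the only remaining work is to verify properties (a) and (b) and that they characterise the family $\{Q_\chi\}$.

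\medskip

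\noindent\textbf{Property (a).} When $\mathcal O_\chi=\mathcal O_i$ for some $i\in I$, we may take $\psi=\chi=\pi|_{\mathcal O_i}$, $\mathcal O_\psi=\mathcal O_i$, $q=\id_{\mathcal O_i}$ and $p:\mathcal O_i\hookrightarrow\mathcal O$ the canonical inclusion of the $i$-th component into the disjoint union $\mathcal O=\coprod_{i\in I}\mathcal O_i$. Then $g=f$ works since $q_!(f)=f$, and $p_!(f)$ is simply $f$ regarded as an element of the $i$-th summand of $\bigoplus_{i\in I}C^\infty_c(\mathcal O_i;\Omega^1(\mathcal O_i))$. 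Hence $Q_\chi(f)$ is the class of $f$ in the quotient, i.e.\ $Q_i$ is the quotient map as claimed. (One should note here that $\id_{\mathcal O_i}$ is a submersion and that $p_!$ for an open inclusion is extension by zero, which is the relevant degenerate case of integration along the fibres.)

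\medskip

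\noindent\textbf{Property (b).} Suppose $q:\mathcal O_\psi\to\mathcal O_\chi$ is a submersion with $\chi\circ q=\psi$, and let $g\in C^\infty_c(\mathcal O_\psi;\Omega^1\mathcal O_\psi)$. We must show $Q_\psi(g)=Q_\chi(q_!(g))$. To compute the right-hand side, apply Lemma~\ref{lem2} to the plot $(\chi,\mathcal O_\chi)$ and the density $q_!(g)$: this gives a further plot $(\psi',\mathcal O_{\psi'})$ with submersions $p':\mathcal O_{\psi'}\to\mathcal O$, $q':\mathcal O_{\psi'}\to\mathcal O_\chi$ and $g'$ with $q'_!(g')=q_!(g)$, and then $Q_\chi(q_!(g))=[p'_!(g')]$. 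To compare with $Q_\psi(g)$, form the fibre product $\mathcal O_{\psi''}=\mathcal O_\psi\times_{\mathcal O_\chi}\mathcal O_{\psi'}$ (using the $D$-topology, exactly as in Lemma~\ref{lem1}), with its two projections $r:\mathcal O_{\psi''}\to\mathcal O_\psi$ and $r':\mathcal O_{\psi''}\to\mathcal O_{\psi'}$, both submersions. One then verifies by a diagram chase and Fubini-type manipulation of integration along fibres — factoring $q\circ r=q'\circ r'$ and tracking the density $g''$ on $\mathcal O_{\psi''}$ pulled back appropriately — that $p_!(g)$ and $p'_!(g')$ differ by an element of $\mathcal I$, whence both equal $Q_\psi(g)$ in the quotient. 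The key input is the standard compatibility of $\phi_!$ with composition of submersions and with fibre products, together with the definition of $\mathcal I$ as spanned by $p_!(f)$ with $q_!(f)=0$.

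\medskip

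\noindent\textbf{Characterisation.} Finally, if $\{Q'_\chi\}$ is another family satisfying (a) and (b), then for any plot $(\chi,\mathcal O_\chi)$ and $f\in C^\infty_c(\mathcal O_\chi;\Omega^1\mathcal O_\chi)$ choose the Lemma~\ref{lem2} data with $q_!(g)=f$; since each component of $\psi$ factors through some $\pi|_{\mathcal O_i}$ via a submersion, property (b) reduces $Q'_\psi$ to the $Q'_i$, which are fixed by (a), and then $Q'_\chi(f)=Q'_\chi(q_!(g))=Q'_\psi(g)=Q_\psi(g)=Q_\chi(f)$. Hence the family is unique.

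\medskip

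\noindent\textbf{Main obstacle.} The delicate point is property (b): one must handle the case where the comparison plot obtained from Lemma~\ref{lem2} is not literally $\mathcal O_\psi$ but a different presentation of the same plot, and show that passing between the two only changes $p_!(g)$ by an element of $\mathcal I$. This requires care with the fibre products taken in the $D$-topology and with the bookkeeping of half-density (here, $1$-density) factors under the short exact sequences $0\to\ker d\phi\to TX\to T\mathcal O_\chi\to 0$; it is precisely the step that was carried out in \cite[Prop.~4.4]{AndrSk}, and the claim is that the same argument transfers verbatim to the present diffeological setting once Lemmas~\ref{lem1} and~\ref{lem2} are in place.
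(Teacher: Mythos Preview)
Your proposal is correct and aligns with the paper's approach: the paper does not give an independent argument but simply states that the characterisation ``is proven exactly as in \cite[Prop.~4.4]{AndrSk}'', and your sketch is precisely an unpacking of that referenced proof in the present diffeological setup, invoking Lemmas~\ref{lem1} and~\ref{lem2} at the appropriate places. In fact you supply more detail than the paper itself, which is content to cite the external source without further comment.
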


\begin{remarks}\label{rmk:staralgebra}
\begin{enumerate}
\item In section \ref{sec:isomorphism} we provide a bijection between $C^{\infty}_c(G(M,\omega))$ and the convolution algebra of the action groupoid $\mathbb{T}^2 \rtimes_{\lambda} \R \gpd \mathbb{T}^2$. This $\ast$-algebra structure is defined using the groupoid structure of the manifold $\mathbb{T}^2 \rtimes_{\lambda} \R \gpd \mathbb{T}^2$, rather than the group structure. Hence, this bijection identifies the completion of $C^{\infty}_c(G(M,\omega))$ with the irrational rotation algebra $A_{\lambda}$; This is the algebra that we want to attach to the group $G(M,\omega)$, since it is the leaf space of the foliation by irrational rotation on $\mathbb{T}^2$. 

\item Using the group structures of $\mathbb{T}^2$ and $G(M,\omega)$, it is possible to endow $C^{\infty}_c(G(M,\omega))$ with a different $\ast$-algebra structure. We provide the explicit construction of this structure in Appendix \ref{app:convolutionalgebraproperties}. 

\item The bijection we define in \S \ref{sec:isomorphism} also conveys the $\ast$-algebra structure arising from the group structure of $\mathbb{T}^2 \rtimes_{\lambda} \R$ to the $\ast$-algebra structure of $C^{\infty}_c(G(M,\omega))$ described in Appendix \ref{app:convolutionalgebraproperties}. This is explained in Appendix \ref{app:convolutionalgebraproperties}, in case the reader finds it useful.
\end{enumerate}
\end{remarks}

\section{Isomorphism with irrational rotation algebra}\label{sec:isomorphism}



The purpose of this section is to prove Theorem \ref{thm}. To this end, in \S \ref{sec:transfgpd} we describe the diffeological structure of the action groupoid $\mathbb{T}^2 \rtimes_{\lambda} \R \gpd \mathbb{T}^2$ and in \S \ref{sec:convalggpd} we construct the associated $\ast$-algebra. The proof of Theorem \ref{thm} is given in \S \ref{sec:proof}.

\subsection{The transformation groupoid of the irrational rotation}\label{sec:transfgpd}



Let us recall the following:
\begin{enumerate}
\item In \cite[Examples 3.4, item 4]{AndrSk} it is shown that, for any $p \in \mathbb{T}^2$, a neighborhood of the identity element $(p,0)$ in $\mathbb{T}^2 \rtimes_{\lambda} \R$ is diffeomorphic to $\mathcal{O}_i \times (-\epsilon_{ij},\epsilon_{ij})$, for some open subset $\mathcal{O}_i$ of $\mathbb{T}^2$. 

\item Under this diffeomorphism, the source map becomes the projection to $\mathcal{O}_i$. The range map takes the form of the exponential $r(p,t) = exp(tX)(p)$, where $X$ is the infinitesimal generator of the irrational rotation action; it is a submersion $r : \mathcal{O}_i \times (-\epsilon_{ij},\epsilon_{ij}) \to \mathcal{O}_j$, where $\mathcal{O}_j$ is some other open subset of $\mathbb{T}^2$. 

\item Put $\phi_{ij} : \mathcal{O}_i \times (-\epsilon_{ij},\epsilon_{ij}) \to s^{-1}(\mathcal{O}_i) \cap r^{-1}(\mathcal{O}_j)$ this diffeomorphism. Then $(\mathcal{O}_i \times (-\epsilon_{ij},\epsilon_{ij}), \phi_{ij}, \mathbb{T}^2 \rtimes_{\lambda} \R)$ is a minimal bisubmersion\footnote{Here, the (``singular'') subalgebroid is the entire Lie algebroid of $\mathbb{T}^2 \rtimes_{\lambda} \R$. We put \textit{singular} in quotes because, in fact, there are no singularities: The module of sections of any Lie algebroid is projective, thanks to the Serre-Swan theorem.} in the sense of \cite[Definitions 2.4, 2.6]{Za22}. These are the path-holonomy bisubmersions of the Lie algebroid $A(\mathbb{T}^2 \rtimes_{\lambda} \R)$.

\item Put $\mathcal{A}$ the atlas of bisubmersions generated by the path-holonomy bisubmersions (\cf \cite[Dfn. 3.4]{Za22}). The Lie groupoid $\mathbb{T}^2 \rtimes_{\lambda} \R \gpd \mathbb{T}^2$ is the quotient groupoid associated to this atlas (\cf \cite[\S 3.2]{Za22}).

\item It follows from the previous item that the $C^*$-algebra of the atlas $\cA$ constructed in \cite[Appendix A]{Za22} is isomorphic with the crossed product $C(\mathbb{T}^2) \rtimes_{\lambda} \Z$, in other words with $A_{\lambda}$.
\end{enumerate}

Since $C^{\infty}_c(\mathbb{T}^2 \rtimes_{\lambda} \R)$ is $\ast$-isomorphic with the $\ast$-algebra associated with the atlas $\cA$ (\cf \cite[Appendix A]{Za22}), we need to understand the relation of bisubmersions as above with plots in $D(G(M,\omega))$. We explain this relation in the following lemmas.

\begin{lemma}\label{lem3}
For every $i \in I$ the path-holonomy bisubmersion $\mathcal{O}_i \times (-\epsilon_{ij},\epsilon_{ij})$ is a plot in $D(G(M,\omega))$.
\end{lemma}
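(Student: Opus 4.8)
The plan is to exhibit the path-holonomy bisubmersion $\mathcal{O}_i \times (-\epsilon_{ij},\epsilon_{ij})$ explicitly as a plot of $G(M,\omega)$, using the characterisation of $D(G(M,\omega))$ from \S\ref{sec:diffeoG}: a map into $G(M,\omega)$ is a plot as soon as it factors locally through one of the generating plots $\pi|_{\mathcal{O}}$ via a smooth map. So the first step is to write down the candidate map $\chi : \mathcal{O}_i \times (-\epsilon_{ij},\epsilon_{ij}) \to G(M,\omega)$. The natural choice is $\chi(p,t) = \pi(p)$, i.e. $\chi = \pi|_{\mathcal{O}_i} \circ \mathrm{pr}_1$, where $\mathrm{pr}_1 : \mathcal{O}_i \times (-\epsilon_{ij},\epsilon_{ij}) \to \mathcal{O}_i$ is the projection onto the first factor — which, by item (1)--(2) of \S\ref{sec:transfgpd}, is precisely the source map $s$ of the groupoid in these coordinates.

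Next I would check that this $\chi$ lies in $D(G(M,\omega))$. Since $\mathrm{pr}_1$ is a smooth (indeed, a submersion) map $\mathcal{O}_i \times (-\epsilon_{ij},\epsilon_{ij}) \to \mathcal{O}_i$, and $\mathcal{O}_i$ is an open subset of $\mathbb{T}^2$, the equality $\chi = \pi|_{\mathcal{O}_i} \circ \mathrm{pr}_1$ shows directly that $\chi$ satisfies the defining local factorisation condition of the generated diffeology $D(G(M,\omega))$ — one may simply take $h = \mathrm{pr}_1$ globally, no shrinking is needed. Thus $(\chi, \mathcal{O}_i \times (-\epsilon_{ij},\epsilon_{ij}))$ is a plot in $D(G(M,\omega))$.

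I would also remark, to make the statement meaningful as phrased, that the identification of the bisubmersion with a plot uses the diffeomorphism $\phi_{ij}$ of item (3): under $\phi_{ij}$ the manifold $\mathcal{O}_i \times (-\epsilon_{ij},\epsilon_{ij})$ is a piece of $\mathbb{T}^2 \rtimes_\lambda \R$, and the projection $\mathrm{pr}_1$ becomes the source map $s$, so the plot is nothing but $\pi \circ s$ restricted to this piece. This is the correct plot to use, because along the $\R$-orbits of the action (the fibres of $r$ versus $s$) the quotient map $\pi$ is constant — the leaves of the irrational rotation foliation are exactly the $\iota_\lambda(\R)$-cosets that $\pi$ collapses — so composing with $s$ rather than $r$ is what produces a well-defined map to the leaf space $G(M,\omega)$.

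There is no serious obstacle here; the only point requiring a moment's care is bookkeeping — making sure the $\mathcal{O}_i$ appearing in the bisubmersion (an open set of $\mathbb{T}^2$ supplied by \S\ref{sec:transfgpd}) is, without loss of generality, one of the members of the finite cover generating $D(G(M,\omega))$, which is legitimate by item (b) of Remarks \ref{rem:cartprod} (the diffeology does not depend on the choice of finite cover, and $\pi|_{\mathcal{O}}$ is a plot for \emph{every} open $\mathcal{O} \subseteq \mathbb{T}^2$). Once that is observed, the lemma follows immediately from the factorisation $\chi = \pi|_{\mathcal{O}_i} \circ \mathrm{pr}_1$.
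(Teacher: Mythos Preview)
Your argument is correct and essentially the same as the paper's: both show the bisubmersion is a plot by writing down a global factorisation $\pi|_{\mathcal{O}}\circ h$ with $h$ smooth. The only difference is which submersion you use for $h$: you take the source map $s=\mathrm{pr}_1:\mathcal{O}_i\times(-\epsilon_{ij},\epsilon_{ij})\to\mathcal{O}_i$, whereas the paper takes the range map $r:\mathcal{O}_i\times(-\epsilon_{ij},\epsilon_{ij})\to\mathcal{O}_j$ and sets $\chi_{ij}=\pi|_{\mathcal{O}_j}\circ r$. Both are legitimate, and in fact they define the \emph{same} map into $G(M,\omega)$, since $r(p,t)=p\cdot\iota_\lambda(t)$ and $\pi$ kills the $\iota_\lambda(\R)$-factor, so $\pi\circ r=\pi\circ s$. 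Your closing remark that ``composing with $s$ rather than $r$ is what produces a well-defined map'' is therefore off: either choice works equally well. The paper's preference for $r$ is not for well-definedness but for bookkeeping downstream --- in \S\ref{sec:proof} the bijection $\Phi$ is built from $Q_{\chi_{ij}}(f)=Q_j(r_!(f))$, so the factorisation through $r$ is the one that gets reused.
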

\begin{proof}
Consider the submersion $r : \mathcal{O}_i \times (-\epsilon_{ij},\epsilon_{ij}) \to \mathcal{O}_j$ and define $\chi_{ij} = \pi|_{\mathcal{O}_j} \circ r$. Obviously, $(\chi_{ij}, \mathcal{O}_i \times (-\epsilon_{ij},\epsilon_{ij}))$ is a plot in $D(G(M,\omega))$.
\end{proof}

\begin{lemma}\label{lem4}
Every bisubmersion $(U,\phi_U, \mathbb{T}^2 \rtimes_{\lambda} \R)$ in the atlas $\mathcal{A}$ is a plot in $D(G(M,\omega))$. 
\end{lemma}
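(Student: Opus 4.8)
The plan is to reduce an arbitrary bisubmersion $(U,\phi_U,\mathbb{T}^2\rtimes_\lambda\R)$ in the atlas $\mathcal{A}$ to the path-holonomy bisubmersions handled by Lemma \ref{lem3}, using the structure theory of atlases of bisubmersions recalled in \S\ref{sec:transfgpd}. Recall that, by definition of the atlas generated by the path-holonomy bisubmersions, every $(U,\phi_U,\mathbb{T}^2\rtimes_\lambda\R)$ in $\mathcal{A}$ is, locally around each point, \emph{adapted} to the path-holonomy bisubmersions: for each $u\in U$ there is an open neighbourhood $U_u\subseteq U$, a path-holonomy bisubmersion $\mathcal{O}_i\times(-\epsilon_{ij},\epsilon_{ij})$, and a smooth map $\beta_u : U_u\to\mathcal{O}_i\times(-\epsilon_{ij},\epsilon_{ij})$ which is a morphism of bisubmersions, i.e.\ it intertwines the source and range maps. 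Since the question of whether $\phi_U : U\to\mathbb{T}^2\rtimes_\lambda\R$ descends to a plot in $D(G(M,\omega))$ is local on $U$ (the diffeology $D(G(M,\omega))$ is defined by a local condition), it suffices to treat each $U_u$ separately.

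On $U_u$, I would argue as follows. Composing $\phi_U$ with the range map $r$ of the groupoid gives a smooth map $r\circ\phi_U : U_u\to\mathbb{T}^2$; more precisely, the morphism of bisubmersions $\beta_u$ satisfies $r\circ\phi_{ij}\circ\beta_u = r_U|_{U_u}$, where $r_U$ denotes the range map of $U$, and $r\circ\phi_{ij} : \mathcal{O}_i\times(-\epsilon_{ij},\epsilon_{ij})\to\mathcal{O}_j$ is the submersion $\exp(tX)(\cdot)$ from item (2) of \S\ref{sec:transfgpd}. The natural plot one attaches to the bisubmersion $U$ is $\pi|_{\mathcal{O}_j}\circ r_U$ (this is how plots were attached to bisubmersions throughout, \cf\ Lemma \ref{lem3}): the leaf through a point of $U$ is seen in $G(M,\omega)$ via its range. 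Thus on $U_u$ we have
\[
\pi|_{\mathcal{O}_j}\circ r_U|_{U_u} \;=\; \pi|_{\mathcal{O}_j}\circ (r\circ\phi_{ij})\circ\beta_u \;=\; \chi_{ij}\circ\beta_u,
\]
where $\chi_{ij} = \pi|_{\mathcal{O}_j}\circ r$ is precisely the plot in $P(G(M,\omega))$ exhibited in the proof of Lemma \ref{lem3}. Since $\beta_u : U_u\to\mathcal{O}_i\times(-\epsilon_{ij},\epsilon_{ij})$ is smooth and $\chi_{ij}$ factors as $\pi|_{\mathcal{O}_j}\circ h$ with $h = r$ a smooth map into an open subset $\mathcal{O}_j$ of $\mathbb{T}^2$, the composite factors as $\pi|_{\mathcal{O}_j}\circ(r\circ\beta_u)$ with $r\circ\beta_u : U_u\to\mathcal{O}_j$ smooth. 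By the local characterisation of $D(G(M,\omega))$ given just before remarks \ref{rem:cartprod}, this shows $\phi_U|_{U_u}$ (read as a map into $G(M,\omega)$ via $\pi$ and the range) is a plot. As $u$ was arbitrary and these local pieces patch (each point of $U$ has a neighbourhood on which the map factors appropriately), $(U,\phi_U,\mathbb{T}^2\rtimes_\lambda\R)$ is a plot in $D(G(M,\omega))$.

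The main obstacle I anticipate is a bookkeeping one rather than a conceptual one: making precise the sense in which a bisubmersion $(U,\phi_U,\mathbb{T}^2\rtimes_\lambda\R)$ "is" a plot — namely one must be careful that the relevant map to $G(M,\omega)$ is $\pi$ composed with the \emph{range} of the bisubmersion (consistently with the convention in Lemmas \ref{lem3}, and with the fact that $G(M,\omega)$ is the leaf space and the range records which leaf a point sits on), and then to invoke the correct form of the "adaptedness" / morphism-of-bisubmersions property from \cite[\S 3]{Za22} guaranteeing the local factorisation through a path-holonomy bisubmersion. Once that is pinned down, the rest is a one-line diagram chase using Lemma \ref{lem3} and the defining local condition for $D(G(M,\omega))$.
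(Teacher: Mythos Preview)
Your proposal is correct and takes essentially the same approach as the paper: reduce locally to the path-holonomy bisubmersions via a morphism of bisubmersions, then define the plot as $\pi|_{\mathcal{O}_j}$ composed with the range map, exactly as in Lemma~\ref{lem3}. The paper's proof is simply the terse version of yours---it shrinks $U$ so that $\phi_U(U)$ lands inside a single $\mathcal{O}_i\times(-\epsilon_{ij},\epsilon_{ij})$ and sets $\chi_U=\pi|_{\mathcal{O}_j}\circ r\circ\phi_U$; your $\beta_u$ is just $\phi_{ij}^{-1}\circ\phi_U|_{U_u}$ in that notation. One small slip: $\chi_{ij}$ is a plot in $D(G(M,\omega))$, not in $P(G(M,\omega))$ (its domain is three-dimensional), but this is harmless since you correctly exhibit the factorisation $\pi|_{\mathcal{O}_j}\circ(r\circ\beta_u)$ through a generating plot anyway.
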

\begin{proof}
Up to shrinking $U$ if necessary, we can assume that $\phi(U)$ is contained in $\mathcal{O}_i \times (-\epsilon_{ij},\epsilon_{ij})$, for some $i,j \in I$. The map $\chi_U : U \to G(M,\omega)$ defined by $\chi = \pi|_{\mathcal{O}_j} \circ r \circ \phi$ makes $(\chi_U,U)$ a plot in $D(G(M,\omega))$.
\end{proof}

\begin{lemma}\label{lem5}
Let $(\chi,\mathcal{O}_{\chi})$ be a plot in $D(G(M,\omega))$ and $p : \mathcal{O}_{\chi} \to \mathcal{O}_{i}$ a submersion for some $i \in I$. Then $(\mathcal{O}_{\chi} \times (-\epsilon_{ij},\epsilon_{ij}), (p\times\id) \circ \phi_{ij}, \mathbb{T}^2 \rtimes_{\lambda} \R)$ is a bisubmersion for (the ``singular subalgebroid'') $\Gamma_c A(\mathbb{T}^2 \rtimes_{\lambda} \R)$. Moreover, $p \times \id$ is a morphism of bisubmersions.
\end{lemma}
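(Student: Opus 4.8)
The plan is to deduce both assertions from the standard stability of bisubmersions under pull-back along submersions, the input being the path-holonomy bisubmersion $(\mathcal{O}_i \times (-\epsilon_{ij},\epsilon_{ij}),\phi_{ij},\mathbb{T}^2 \rtimes_{\lambda}\R)$ of item (3) (\cf \cite[Def.~2.6]{Za22}). Here I read the structure map of the prospective bisubmersion as $\Phi_{ij} := \phi_{ij}\circ(p\times\id) : \mathcal{O}_{\chi}\times(-\epsilon_{ij},\epsilon_{ij}) \to \mathbb{T}^2 \rtimes_{\lambda}\R$, \ie one first applies $p\times\id$ and then the chart $\phi_{ij}$ into the groupoid, this being the only meaningful order of composition.

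First I would observe that $p\times\id$ is a submersion, being the product of the submersion $p:\mathcal{O}_{\chi}\to\mathcal{O}_i$ with the identity of $(-\epsilon_{ij},\epsilon_{ij})$. Then I would read off the source and range of the prospective bisubmersion by composing $\Phi_{ij}$ with the source, \resp range, map of $\mathbb{T}^2 \rtimes_{\lambda}\R$: by item (2) these become $(x,t)\mapsto p(x)$ and $(x,t)\mapsto \exp(tX)(p(x))$, each a composition of $p\times\id$ with a submersion, hence a submersion. Finally I would invoke the general principle that, given a bisubmersion $(V,\mathbf{s}_V,\mathbf{r}_V)$ for $\Gamma_c A(\mathbb{T}^2\rtimes_{\lambda}\R)$ and a submersion $g:U\to V$, the triple $(U,\mathbf{s}_V\circ g,\mathbf{r}_V\circ g)$ is again such a bisubmersion and $g$ is a morphism of bisubmersions (the subalgebroid analogue of \cite[Prop.~2.10]{AndrSk}, \cf also \cite{Za22}), and apply it with $V=\mathcal{O}_i\times(-\epsilon_{ij},\epsilon_{ij})$ and $g=p\times\id$. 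The ``moreover'' then comes for free: $p\times\id$ commutes with the source and range maps by the very way the new source and range were produced.

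The step carrying the actual content — the one I would argue by hand if the reference were unavailable — is the module equality $\mathbf{s}^{-1}\big(\Gamma_c A(\mathbb{T}^2\rtimes_{\lambda}\R)\big)=\mathbf{r}^{-1}\big(\Gamma_c A(\mathbb{T}^2\rtimes_{\lambda}\R)\big)=\Gamma_c(\ker d\mathbf{s})+\Gamma_c(\ker d\mathbf{r})$ on $\mathcal{O}_{\chi}\times(-\epsilon_{ij},\epsilon_{ij})$. In local submersion coordinates for $g=p\times\id$ one splits a vector field vertical for $\mathbf{s}_V\circ g$ as a part lifted from an $\mathbf{s}_V$-vertical field on $V$ (which lies in the pulled-back module since $V$ is already a bisubmersion) plus a $g$-vertical part, and symmetrically for $\mathbf{r}$ — precisely the computation behind the cited stability statement. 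I do not anticipate a real obstacle here: as the footnote to item (3) recalls, $\Gamma_c A(\mathbb{T}^2\rtimes_{\lambda}\R)$ is the honestly regular, rank-one foliation generated by the single vector field $X$, so every module in sight is locally free and the bookkeeping is routine; the only mildly delicate point is the purely formal one of transporting the two equalities along $p\times\id$.
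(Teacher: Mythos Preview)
Your proposal is correct and follows essentially the same route as the paper: both observe that $\phi_{ij}\circ(p\times\id)$ is a submersion (a composite of submersions) and that the bisubmersion axioms are then inherited from the path-holonomy bisubmersion $\mathcal{O}_i\times(-\epsilon_{ij},\epsilon_{ij})$. The only difference is presentational --- the paper simply says ``it is straightforward to see that \cite[Def.~2.4]{Za22} is satisfied,'' whereas you name the underlying mechanism explicitly as the pull-back--stability of bisubmersions (the analogue of \cite[Prop.~2.10]{AndrSk}); your reading of the composition order and your remark that the ``moreover'' is automatic are both on point.
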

\begin{proof}
The composition $\mathcal{O}_{\chi} \times (-\epsilon_{ij},\epsilon_{ij}) \stackrel{(p\times\id)}{\longrightarrow} \mathcal{O}_{i} \times (-\epsilon_{ij},\epsilon_{ij}) \stackrel{\phi_{ij}}{\longrightarrow} \mathbb{T}^2 \rtimes_{\lambda} \R$ is a submersion. The module of right-invariant vector fields of $\mathbb{T}^2 \rtimes_{\lambda} \R$ corresponding to the (``singular'') subalgebroid $\Gamma_c A(\mathbb{T}^2 \rtimes_{\lambda} \R)$ is $\Gamma_c(\ker ds)$. It is straightforward to see that \cite[definition 2.4]{Za22} is satisfied.
\end{proof}

\begin{remarks}\label{rem2}
Lemma \ref{lem5} shows that, for every plot $(\chi,\mathcal{O}_{\chi})$ in $D(G(M,\omega))$, there is $\epsilon > 0$ such that $\mathcal{O}_{\chi} \times (-\epsilon,\epsilon)$ is a bisubmersion of (the ``singular subalgebroid'') $\Gamma_c A(\mathbb{T}^2 \rtimes_{\lambda} \R)$. To explain this, let us look at the dimension of $\mathcal{O}_{\chi}$.
\begin{enumerate}
\item If $dim\mathcal{O}_{\chi} = 1$, there is an $\epsilon > 0$ and a diffeomorphism $\eta : \mathcal{O}_{\chi} \to \R$ such that $h = \iota \circ \eta$, where $\iota$ is the inclusion map in \eqref{seq1}. Note that then the condition $\chi = \pi \circ h$ implies that the map $\chi$ is constant. 

Now define $s, r : \mathcal{O}_{\chi} \times (-\epsilon,\epsilon) \to \mathbb{T}^2$ by $s(x,t) = \iota(t)$ and $r(x,t)=\iota(\phi(x) + t)$. It is straightforward that $(\mathcal{O}_{\chi},s,r)$ is a bisubmersion.

\item If $dim\mathcal{O}_{\chi} \geq 2$, the map $h$ is a submersion because: Identifying $\mathcal{O}_i$ with an open ball in $\R^2$, let $X = \partial_x + \lambda\partial_y$ be the infinitesimal generator of the irrational rotation action and put $F$ the $C^{\infty}(\mathcal{O}_i)$-submodule of $\cX(\mathcal{O}_i)$ generated by the restriction of $X$ to $\mathcal{O}_i$. Now choose a vector field $Y$ of $\mathcal{O}_{\chi}$ such that $dh(Y)=X$ and put $h^{-1}(F)$ the span of $Y$. This is a foliation of $\mathcal{O}_{\chi}$ and by definition $h$ maps leaves of $h^{-1}(F)$ to leaves of $F$. The condition $\chi = \pi \circ h$ implies that $h$ also maps (small) transversals of $h^{-1}(F)$ to (small) transversals of $F$. So, if $L$ is a leaf and $S$ a transversal of $h^{-1}(F)$ at $u$, then $h(L)$ is a leaf and $h(S)$ is a transversal of $F$ at $h(u)$. The decompositions $T_u\mathcal{O}_{\chi} = T_u L \oplus T_u S$ and $T_{h(u)}\mathcal{O}_i = T_{h(u)}(h(L)) \oplus T_{h(u)}(h(S))$ show that $dh$ is surjective.


\item Note that in all cases, there is a submersion $r : \mathcal{O}_{\chi} \times (-\epsilon,\epsilon) \to \mathcal{O}_j$ for some $j \in I$. In the case of lemma \ref{lem5}, $r$ is the composition of the target map of $\mathbb{T}^2 \rtimes_{\lambda} \R$ with the map $(p\times\id) \circ \phi_{ij}$. When $dim\mathcal{O}_{\chi} = 1$ we already defined $r$ and when $dim\mathcal{O}_{\chi} \geq 2$, $r$ is defined as in lemma \ref{lem5}.
\end{enumerate}
\end{remarks}


\subsection{The convolution algebra}\label{sec:convalggpd}

Because of the previous discussion, the convolution algebra of the transformation groupoid $\mathbb{T}^2 \rtimes_{\lambda} \R \gpd \mathbb{T}^2$ can also be constructed using bisubmersions, as we explained in \cite[Appendix A]{Za22}. This construction is more suitable for our purposes, so let us recall it.

\begin{enumerate}
\item Since the range map $r : \mathcal{O}_i \times (-\epsilon_{ij},\epsilon_{ij}) \to  \mathcal{O}_j$ is a submersion we have the exact sequence $$0 \to \ker dr \to T(\mathcal{O}_i \times (-\epsilon_{ij},\epsilon_{ij})) \stackrel{dr}{\longrightarrow} r^{*}(T\mathcal{O}_j) \to 0$$ Whence $\Omega^1(T(\mathcal{O}_i \times (-\epsilon_{ij},\epsilon_{ij})))$ is isomorphic to $\Omega^1(\ker dr) \otimes r^{*}\Omega^1(T\mathcal{O}_j)$.

\item The convolution algebra is defined as $$C^{\infty}_{c}(\mathbb{T}^2 \rtimes_{\lambda} \R) = \frac{\oplus_{i,j}C^{\infty}_c(\mathcal{O}_i \times (-\epsilon_{ij},\epsilon_{ij}); \Omega^1(\ker dr) \otimes r^{*}\Omega^1(\mathcal{O}_j))}{\mathcal{J}}$$ where the subspace $\mathcal{J}$ is defined in a way analogous to the definition of the subspace $\mathcal{I}$ we gave in definition \ref{dfn1}. The exact definition of $\mathcal{J}$ is given right after \cite[Lemma A.4]{Za22}. It is analogous to our lemmata \ref{lem1}, \ref{lem2}. 


\item Given a bisubmersion $r : \mathcal{O}_i \times (-\epsilon_{ij},\epsilon_{ij}) \to \mathcal{O}_j$, put $$Q^{\cA}_{ij} : C^{\infty}_c(\mathcal{O}_i \times (-\epsilon_{ij},\epsilon_{ij}); \Omega^1(\ker dr) \otimes r^{*}\Omega^1(\mathcal{O}_j)) \to C^{\infty}_{c}(\mathbb{T}^2 \rtimes_{\lambda} \R)$$ the obvious quotient map.

\item Let $(U,\phi_U, \mathbb{T}^2 \rtimes \R)$ be a bisubmersion adapted to the atlas $\cA$. Recall from \cite[Appendix A]{Za22} that there is a linear map $Q^{\cA}_U : C^{\infty}_c(U;\Omega^1(U)) \to C^{\infty}_c(\mathbb{T}^2 \rtimes_{\lambda} \R)$ such that:
\begin{itemize}
\item If $(U,\phi_U, \mathbb{T}^2 \rtimes \R)$ is a bisubmersion $r : \mathcal{O}_i \times (-\epsilon_{ij},\epsilon_{ij}) \to \mathcal{O}_j$ then $Q^{\cA}_U = Q^{\cA}_{ij}$.
\item If $p : U' \to U$ is a morphism of bisubmersions which is a submersion, then $Q^{\cA}_{U'} = Q^{\cA}_U \circ p_{!}$.
\end{itemize}

\item Let us recall the definition of the $\ast$-algebra structure of $C^{\infty}_{c}(\mathbb{T}^2 \rtimes_{\lambda} \R)$ from \cite[\text{Appendix A}]{Za22}. Let $(U,\phi_U, \mathbb{T}^2 \rtimes_{\lambda} \R)$ and $(V,\phi_V, \mathbb{T}^2 \rtimes_{\lambda} \R)$ be two bisubmersions adapted to the atlas $\cA$ and $f_U \in C^{\infty}(U;\Omega^1(U))$, $f_V \in C^{\infty}(V;\Omega^1(V))$.
\begin{itemize}
\item The inverse of $(U,\phi_U, \mathbb{T}^2 \rtimes_{\lambda} \R)$ is the bisubmersion $(U^{-1},\phi_{U^{-1}}, \mathbb{T}^2 \rtimes_{\lambda} \R)$ where $U^{-1}=U$ and $\phi_{U^{-1}} = \iota_{\mathbb{T}^2 \rtimes_{\lambda} \R} \circ \phi_{U}$, where $\iota_{\mathbb{T}^2 \rtimes_{\lambda} \R}$ is the inversion map of $\mathbb{T}^2 \rtimes_{\lambda} \R$. It is adapted to $\cA$. Put $f_U^{\ast} = \overline{f}_U \circ \iota_{\mathbb{T}^2 \rtimes_{\lambda} \R}^{-1}$. Involution is defined by $$(Q^{\cA}_U(f_U))^{\ast} = Q^{\cA}_{U^{-1}}(f_U^{\ast})$$
\item The composition $(U \circ V, \phi_U \cdot \phi_V, \mathbb{T}^2 \rtimes_{\lambda} \R)$ is defined by $U \circ V = U \times_{s_U,r_V} V$ and $\phi_U \cdot \phi_V = m_{\mathbb{T}^2 \rtimes_{\lambda} \R} \circ (\phi_U,\phi_V)$, where $m_{\mathbb{T}^2 \rtimes_{\lambda} \R}$ is the groupoid multiplication. It is adapted to $\cA$ as well. Convolution is defined by $$Q^{\cA}_U(f_U)\ast Q^{\cA}_V(f_V) = Q^{\cA}_{V\circ W}(f_U \otimes f_V)$$
\end{itemize}
\end{enumerate}

As we saw in \S \ref{sec:transfgpd} every plot $(\chi,\mathcal{O}_{\chi})$ in $D(G(M,\omega))$, gives rise to a bisubmersion $\mathcal{O}_{\chi} \times (-\epsilon,\epsilon)$ together with a submersion $r : \mathcal{O}_{\chi} \times (-\epsilon,\epsilon) \to \mathcal{O}_{j}$. The exact sequence 
\begin{eqnarray}\label{ext3}
0 \to \ker dr \to T(\mathcal{O}_{\chi} \times (-\epsilon,\epsilon)) \stackrel{dr}{\longrightarrow} r^*(\mathcal{O}_j) \to 0
\end{eqnarray}
induces an isomorphism of $\Omega^1(\mathcal{O}_{\chi}) \otimes \Omega^1(-\epsilon,\epsilon)$ with $\Omega^1(\ker dr) \otimes r^{*}\Omega^1(\mathcal{O}_j)$. Now, every $g \in C^{\infty}_c(\mathcal{O}_{\chi};\Omega^1(\mathcal{O}_{\chi}))$ can be identified with $\hat{g}=g \otimes \ell \in C^{\infty}(\mathcal{O}_{\chi} \times (-\epsilon,\epsilon));\Omega^1(\ker dr) \otimes r^{*}\Omega^1(\mathcal{O}_j))$, where $\ell$ is the Lebesque measure in $(-\epsilon,\epsilon)$. This defines the quotient map $$C^{\infty}(\mathcal{O}_{\chi};\Omega^1(\mathcal{O}_{\chi}))\ni g \mapsto Q_{\chi}^{\cA}(\hat{g}) \in C^{\infty}_c(\mathbb{T}^2 \rtimes_{\lambda}\R)$$

\subsection{The bijection}\label{sec:proof}

For every $i \in I$ consider the plot $(\chi_{ij}, \mathcal{O}_i \times (-\epsilon_{ij},\epsilon_{ij}))$ defined in lemma \ref{lem3} and the submersion $r$. We obtain a linear map $$C^{\infty}_c(\mathcal{O}_i \times (-\epsilon_{ij},\epsilon_{ij}); \Omega^1(\ker dr) \otimes r^{*}\Omega^1(\mathcal{O}_j)) \ni f \mapsto Q_{\chi_{ij}}(f) \in C^{\infty}_c(G(M,\omega))$$ It follows from proposition \ref{prop1} that $Q_{\chi_{ij}}(f)=Q_j(r_!(f))$. Lemma \ref{lem4} shows that the induced map $$\Phi : C^{\infty}_{c}(\mathbb{T}^2 \rtimes_{\lambda} \R) \to C^{\infty}_c(G(M,\omega))$$ is well defined. The fact that $r$ is a submersion shows that $\Phi$ is surjective. The difficulty is to prove the injectivity of $\Phi$. We do this in Proposition \ref{prop2} below. 

\begin{prop}\label{prop2}
The map $\Phi$ is injective.
\end{prop}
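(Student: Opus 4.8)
The plan is to construct an explicit inverse to $\Phi$, or at least a well-defined left inverse, by reversing the passage from plots in $D(G(M,\omega))$ to bisubmersions described in \S\ref{sec:transfgpd}. The essential point is that the two quotients $C^{\infty}_c(\mathbb{T}^2 \rtimes_{\lambda} \R)$ and $C^{\infty}_c(G(M,\omega))$ are built from the same kind of building blocks (compactly supported $1$-densities on open pieces modulo relations coming from submersions compatible with the projection to the leaf space), and the maps $Q^{\cA}_U$ and $Q_\chi$ satisfy parallel characterisations (Proposition \ref{prop1} here, versus \cite[Appendix A]{Za22} there). So I would first record, using Lemma \ref{lem3}, Lemma \ref{lem4} and Remarks \ref{rem2}, that every bisubmersion $(U,\phi_U,\mathbb{T}^2\rtimes_\lambda\R)$ adapted to $\cA$ carries, besides the submersion $r_U : U \to \mathcal{O}_j$ to a chart of $\mathbb{T}^2$, the canonical plot $\chi_U = \pi|_{\mathcal{O}_j}\circ r_U$ into $G(M,\omega)$; and conversely every plot $(\chi,\mathcal{O}_\chi)$ in $D(G(M,\omega))$ produces (after shrinking, via Lemma \ref{lem1} and Remarks \ref{rem2}) a bisubmersion $\mathcal{O}_\chi\times(-\epsilon,\epsilon)$ with $r : \mathcal{O}_\chi\times(-\epsilon,\epsilon)\to\mathcal{O}_j$, and the defining sequence \eqref{ext3} identifies $\Omega^1(\mathcal{O}_\chi)\otimes\Omega^1(-\epsilon,\epsilon)$ with $\Omega^1(\ker dr)\otimes r^*\Omega^1(\mathcal{O}_j)$.

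Next I would define the candidate inverse $\Psi : C^{\infty}_c(G(M,\omega))\to C^{\infty}_c(\mathbb{T}^2\rtimes_\lambda\R)$ on generators: given a class represented by $g\in C^{\infty}_c(\mathcal{O}_i;\Omega^1(\mathcal{O}_i))$, send it to $Q^{\cA}_{i i'}(\widehat{g})$ where $\widehat{g}=g\otimes\ell$ under the identification above and $\ell$ is Lebesgue measure on a small interval, exactly as at the end of \S\ref{sec:convalggpd}. The real work is to check that $\Psi$ is well defined, i.e.\ that the generating relations of $\cI$ (Definition \ref{dfn1}: expressions $p_!(f)$ with $q_!(f)=0$, for the data $\mathcal{O}_\psi,p,q$ of Lemma \ref{lem1}) are killed in $C^{\infty}_c(\mathbb{T}^2\rtimes_\lambda\R)$. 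Here one lifts the whole configuration $\mathcal{O}_\psi \xrightarrow{p}\mathcal{O}, \mathcal{O}_\psi\xrightarrow{q}\mathcal{O}_\chi$ to bisubmersions by crossing with a small interval (Lemma \ref{lem5}: $p\times\id$ and $q\times\id$ become morphisms of bisubmersions that are submersions), uses the second bullet of item 4 in \S\ref{sec:convalggpd} ($Q^{\cA}_{U'}=Q^{\cA}_U\circ p_!$) to rewrite, and invokes the relation defining $\cJ$ in \cite[Appendix A]{Za22} — which is precisely the statement that $p_!$ of a $1$-density whose $q_!$ vanishes lies in $\cJ$. Then I would verify $\Phi\circ\Psi=\id$ and $\Psi\circ\Phi=\id$ on generators, both of which reduce, via Proposition \ref{prop1}(b) and item 4 of \S\ref{sec:convalggpd}, to the compatibility $Q_{\chi_{ij}}(f)=Q_j(r_!(f))$ already noted before Proposition \ref{prop2}, together with the fact that $r_!(\widehat{g}) = g$ (integrating out the interval factor against $\ell$) up to the canonical trivialisation.

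The main obstacle I expect is the well-definedness of $\Psi$: matching the relation subspace $\cI$ of Definition \ref{dfn1} with the relation subspace $\cJ$ of \cite[Appendix A]{Za22} under the crossing-with-an-interval operation. Concretely, a relation in $\cI$ lives on plots $\mathcal{O}_\psi$ of unconstrained dimension, whereas relations in $\cJ$ live on bisubmersions, which are the $\mathcal{O}_\psi\times(-\epsilon,\epsilon)$; one must check that a generator $p_!(f)$ of $\cI$ with $q_!(f)=0$ pulls back, after tensoring $f$ with $\ell$, to a generator of $\cJ$ — and that this does not depend on the choices ($\epsilon$, the chart index $j$, the local section $\sigma$ of Remarks \ref{rem:cartprod}(i) used to present products). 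This is where I would spend the bulk of the argument, reducing everything to the two characterising properties of the $Q$-maps and to Fubini for integration along fibres of $r = (\text{target})\circ(p\times\id)\circ\phi_{ij}$ versus $q$. The surjectivity of $\Phi$ being already known, once $\Psi$ is shown well defined and a one-sided inverse, injectivity of $\Phi$ follows, proving Proposition \ref{prop2} and hence Theorem \ref{thm}.
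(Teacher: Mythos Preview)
Your approach is correct and rests on the same key technical ingredient as the paper's: lifting the relation data (plots, submersions $p,q$, and a density with vanishing $q_!$) from the plot level to the bisubmersion level by crossing with a small interval $(-\epsilon,\epsilon)$, via Lemma~\ref{lem5} and Remarks~\ref{rem2}. The paper, however, packages this more economically as a direct kernel computation rather than building a two-sided inverse: it takes a representative $f$ with $\Phi([f])=0$, unwinds this as $r_!(f)=p_!(g)$ with $q_!(g)=0$ for suitable plot data $(\mathcal{O}_\psi,p,q,g)$, then crosses everything with $(-\epsilon,\epsilon)$ and uses a local section $\sigma$ of $r$ to produce $\hat{p},\hat{q},\hat{g}$ witnessing $f\in\cJ$ directly. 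Your route buys a cleaner conceptual picture (an explicit inverse $\Psi$, making the parallel between the two quotient constructions manifest) at the cost of two verifications instead of one.

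One caution on your final sentence: the identity $r_!(\hat{g})=g$ you emphasise gives $\Phi\circ\Psi=\id$, which only re-proves surjectivity of $\Phi$. For injectivity you need the \emph{left}-inverse identity $\Psi\circ\Phi=\id$, i.e.\ $[\widehat{r_!(f)}]=[f]$ in $C^\infty_c(\mathbb{T}^2\rtimes_\lambda\R)$ for $f$ supported on a path-holonomy bisubmersion. This is not a formality --- it is exactly where the paper invokes the local section $\sigma$ of $r$ to manufacture the morphism of bisubmersions $\hat{\sigma}\circ(p\times\id)$ --- and it deserves the same care you plan to give the well-definedness of $\Psi$.
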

\begin{proof}
Let $f$ be an element of $C^{\infty}_c(\mathcal{O}_i \times (-\epsilon_{ij},\epsilon_{ij}); \Omega^1(\ker dr) \otimes r^{*}\Omega^1(\mathcal{O}_j))$ such that $\Phi([f])$ vanishes, where $[f]$ is the class of $f$ in $C^{\infty}_{c}(\mathbb{T}^2 \rtimes_{\lambda} \R))$. This means that there are plots $(\psi,\mathcal{O}_{\psi})$ and $(\chi,\mathcal{O}_{\chi})$ in $D(G(M,\omega))$, a density $g \in C^{\infty}_c(\mathcal{O}_{\psi};\Omega^1\mathcal{O}_{\psi})$ and submersions $p : \mathcal{O}_{\psi} \to \mathcal{O}_j$, $q : \mathcal{O}_{\psi} \to \mathcal{O}_{\chi}$ such that $p_!(g)=r_!(f)$ and $q_!(g)=0$. 

Because of lemma \ref{lem5} and remark \ref{rem2}, there is an $\epsilon > 0$ such that $\mathcal{O}_{\psi} \times (-\epsilon,\epsilon)$, $\mathcal{O}_{\chi} \times (-\epsilon,\epsilon)$ and $\mathcal{O}_{j} \times (-\epsilon,\epsilon)$ are bisubmersions. Put $\hat{p} = p \times \id$, $\hat{q}=q\times \id$. Let $\sigma$ be a local section of the submersion $r : \mathcal{O}_i \times (-\epsilon,\epsilon) \to \mathcal{O}_j$ and put: 
\[
\hat{\sigma} = (\sigma \circ pr_1) \times \pr_2 : \mathcal{O}_j \times (-\epsilon,\epsilon) \to \mathcal{O}_i \times (-\epsilon,\epsilon)
\]
\[
\hat{p} = \hat{\sigma}\circ (p \times \id) :  \mathcal{O}_{\psi} \times (-\epsilon,\epsilon) \to \mathcal{O}_{i} \times (-\epsilon,\epsilon)
\]
\[
\hat{q} = q \times \id : \mathcal{O}_{\psi} \times (-\epsilon,\epsilon) \to \mathcal{O}_{\chi} \times (-\epsilon,\epsilon)
\]
It follows that $f = \hat{p}_!(\hat{g})$ and $\hat{q}_!(\hat{g})=0$, whence the class $[f]$ vanishes.
\end{proof}


The bijection $\Phi$ conveys to $C^{\infty}_c(G(M,\omega))$ the $\ast$-algebra structure of $C^{\infty}_c(\mathbb{T}^2 \rtimes_{\lambda} \R)$. To see this, recall from Lemma \ref{lem5} that every plot $\chi : \mathcal{O}_{\chi} \to G(M,\omega)$ gives rise to a bisubmersion $\mathcal{O}_{\chi} \times (-\epsilon,\epsilon)$ of $\mathbb{T}^2 \rtimes_{\lambda} \R$. The bijection $\Phi$ also conveys the completion of $C^{\infty}_c(\mathbb{T}^2 \rtimes_{\lambda} \R)$. This proves Theorem \ref{thm}.

\appendix

\section{Another $C^{\ast}$-algebra attached to $G(M,\omega)$}\label{app:convolutionalgebraproperties}

As we mentioned in Remark \ref{rmk:staralgebra}, the space $C^{\infty}_c(G(M,\omega))$ admits another $\ast$-algebra structure, arising from the group structure of $\mathbb{T}^2$, rather than the groupoid structure of $\mathbb{T}^2 \rtimes_{\lambda} \R$. For the convenience of the reader, in this appendix we describe this structure. Moreover, we show that the bijection $\Phi$ conveys this structure to the $\ast$-algebra structure associated with the group structure of $\mathbb{T}^2 \rtimes \R$. We will use the notation of lemma \ref{lem2}.

\subsection{Another $\ast$-algebra structure for $C^{\infty}_{c}(G(M,\omega))$}\label{app:anotherstaralgG}

Let us define the convolution first. If $\chi_i : \mathcal{O}_{\chi_i} \to G(M,\omega)$ are plots in $D(G(M,\omega))$, $i=1,2$, then for every $(x,y)$ in $\mathcal{O}_{\chi_1} \times \mathcal{O}_{\chi_2}$ we have $\Omega^1(\mathcal{O}_{\chi_1} \times \mathcal{O}_{\chi_2})_{(x,y)} = \Omega^1(\mathcal{O}_{\chi_1})_x \otimes \Omega^1(\mathcal{O}_{\chi_2})_y$. This identification allows us to define the tensor product of $f_1 \in C^{\infty}_{c}(\mathcal{O}_{\chi_1};\Omega^1 \mathcal{O}_{\chi_1})$ and $f_2\in C^{\infty}_{c}(\mathcal{O}_{\chi_2};\Omega^1 \mathcal{O}_{\chi_2})$ by: $$f_1 \otimes f_2 :  (x,y) \mapsto f_{1}(x) \otimes f_{2}(y) \in C^{\infty}_c(\mathcal{O}_{\chi_1} \times \mathcal{O}_{\chi_2};\Omega^1(\mathcal{O}_{\chi_1} \times \mathcal{O}_{\chi_2}))$$ 
\begin{definition}\label{dfn:convolution}
Given $f_i$ in $C^{\infty}_c(\mathcal{O}_{\chi_i};\Omega^1\mathcal{O}_{\chi_i})$, $i=1,2$ we define $Q_{\chi_1}(f_1) \ast Q_{\chi_2}(f_2)$ to be the class of $(m_{\mathbb{T}^2})_!\left((p_1)_!(g_1) \otimes (p_2)_!(g_2)\right)$.
\end{definition}
\begin{prop}\label{prop:convolution}
The product in definition \ref{dfn:convolution} makes $C^{\infty}_c(G(M,\omega))$ an associative algebra over $\C$. Namely, for every $f_i$ in $C^{\infty}_c(\mathcal{O}_{\chi_i};\Omega^1\mathcal{O}_{\chi_i})$, $i=1,2,3$ and every $\lambda, \mu \in \C$ we have:
\begin{enumerate}
\item $(Q_{\chi_1}(f_1) \ast Q_{\chi_2}(f_2)) \ast Q_{\chi_3}(f_3) = Q_{\chi_1}(f_1) \ast (Q_{\chi_2}(f_2) \ast Q_{\chi_3}(f_3))$
\item $Q_{\chi_1}(f_1) \ast (Q_{\chi_2}(f_2) + Q_{\chi_3}(f_3)) =  Q_{\chi_1}(f_1) \ast Q_{\chi_2}(f_2) + Q_{\chi_1}(f_1) \ast Q_{\chi_2}(f_3)$
\item $(Q_{\chi_1}(f_1) + Q_{\chi_2}(f_2)) \ast Q_{\chi_3}(f_3) = Q_{\chi_1}(f_1) \ast Q_{\chi_2}(f_3) + Q_{\chi_1}(f_2) \ast Q_{\chi_2}(f_3)$
\item $(\lambda Q_{\chi_1}(f_1)) \ast (\mu Q_{\chi_1}(f_1)) = (\lambda\mu)(Q_{\chi_1}(f_1) \ast Q_{\chi_2}(f_2))$
\end{enumerate}
\end{prop}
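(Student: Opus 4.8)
The plan is to reduce the whole statement to the single identity
\[
Q_{\chi_1}(f_1)\ast Q_{\chi_2}(f_2)=Q_{\chi_{12}}(f_1\otimes f_2),\qquad \chi_{12}:=m_G\circ(\chi_1\times\chi_2),
\]
where $\chi_{12}$ is the \emph{product} of the plots $\chi_1,\chi_2$ in the sense of item (g) of Remark \ref{rem:cartprod}. Granting this identity, the product of Definition \ref{dfn:convolution} is well defined --- it depends neither on the auxiliary data $\mathcal{O}_{\psi_i},p_i,q_i,g_i$ of Lemma \ref{lem2} nor on the representatives $f_i$ --- since the right-hand side involves only the already well-defined maps $Q_\chi$ of Proposition \ref{prop1} and the manifestly well-defined tensor product $f_1\otimes f_2$ of densities. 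This well-definedness is, I expect, the only real obstacle; everything else is formal.

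To establish the identity I would argue as follows. Pick data $\mathcal{O}_{\psi_i}$, submersions $p_i:\mathcal{O}_{\psi_i}\to\mathcal{O}$ and $q_i:\mathcal{O}_{\psi_i}\to\mathcal{O}_{\chi_i}$, and densities $g_i$ with $(q_i)_!(g_i)=f_i$ as in Lemma \ref{lem2}, so that $\chi_i\circ q_i=\overline{\pi}\circ p_i$ (Lemmas \ref{lem1}, \ref{lem2}). Then $q_1\times q_2:\mathcal{O}_{\psi_1}\times\mathcal{O}_{\psi_2}\to\mathcal{O}_{\chi_1}\times\mathcal{O}_{\chi_2}$ is a submersion, and since $\pi$ is a group morphism,
\[
\chi_{12}\circ(q_1\times q_2)=m_G\circ(\overline{\pi}\times\overline{\pi})\circ(p_1\times p_2)=\overline{\pi}\circ m_{\mathbb{T}^2}\circ(p_1\times p_2),
\]
where $m_{\mathbb{T}^2}\circ(p_1\times p_2)$ is a submersion onto an open subset of $\mathbb{T}^2$ because $m_{\mathbb{T}^2}$ is. Applying property (b) of Proposition \ref{prop1} twice --- first to $q_1\times q_2$, then to $m_{\mathbb{T}^2}\circ(p_1\times p_2)$ --- and using $(q_1\times q_2)_!(g_1\otimes g_2)=f_1\otimes f_2$ together with $\big(m_{\mathbb{T}^2}\circ(p_1\times p_2)\big)_!(g_1\otimes g_2)=(m_{\mathbb{T}^2})_!\big((p_1)_!(g_1)\otimes(p_2)_!(g_2)\big)$, one finds that $Q_{\chi_{12}}(f_1\otimes f_2)$ is precisely the class of $(m_{\mathbb{T}^2})_!\big((p_1)_!(g_1)\otimes(p_2)_!(g_2)\big)$, which is the right-hand side of Definition \ref{dfn:convolution}. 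The one technical point is that the image of $m_{\mathbb{T}^2}\circ(p_1\times p_2)$ is in general not a single atlas chart $\mathcal{O}_i$; this is handled by Remark \ref{rem:cartprod}(b), which shows $\pi$ restricted to any open subset of $\mathbb{T}^2$ is a plot in $D(G(M,\omega))$ to which the construction of Proposition \ref{prop1} applies, plus a partition of unity subordinate to the $\mathcal{O}_i$, so that $Q$ on such a plot is just ``take the class''.

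With the identity in hand, the four assertions become formal. For (a), using the identity twice together with the associativity of $\times$ and of $\otimes$, both sides equal $Q_{\psi}(f_1\otimes f_2\otimes f_3)$, with $\psi=m_G\circ(m_G\times\id_{G(M,\omega)})\circ(\chi_1\times\chi_2\times\chi_3)$ on one side and $\psi=m_G\circ(\id_{G(M,\omega)}\times m_G)\circ(\chi_1\times\chi_2\times\chi_3)$ on the other; these coincide because $m_G$ is the associative multiplication of the group $G(M,\omega)$ of \eqref{seq1}. For (b) and (c), write $Q_{\chi_2}(f_2)+Q_{\chi_3}(f_3)=Q_{\chi_2\amalg\chi_3}(f_2\amalg f_3)$ --- legitimate by Remark \ref{rem:cartprod}(c) and property (b) of Proposition \ref{prop1} applied to the open inclusions $\mathcal{O}_{\chi_k}\hookrightarrow\mathcal{O}_{\chi_2}\amalg\mathcal{O}_{\chi_3}$, for which $p_!$ is extension by zero --- then use $f_1\otimes(f_2\amalg f_3)=(f_1\otimes f_2)\amalg(f_1\otimes f_3)$ and that $m_G\circ(\chi_1\times(\chi_2\amalg\chi_3))$ restricts to $m_G\circ(\chi_1\times\chi_k)$ on each summand. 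Finally (d) follows from $\lambda Q_{\chi_1}(f_1)=Q_{\chi_1}(\lambda f_1)$, the linearity of the maps $Q_\chi$, and $(\lambda f_1)\otimes(\mu f_2)=\lambda\mu\,(f_1\otimes f_2)$. Thus the crux is the well-definedness identity of the first paragraph; once it is in place, associativity is a direct consequence of the associativity of the group law on $G(M,\omega)$, and the remaining items are routine multilinearity.
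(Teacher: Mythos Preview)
Your proposal is correct and considerably more detailed than the paper's own proof, which consists of two sentences: ``Associativity follows from a straightforward calculation, applying the familiar Fubini theorem. The other algebraic properties follow immediately from the definition.'' The underlying mathematics is the same --- your repeated use of Proposition~\ref{prop1}(b) is exactly the functoriality $(p\circ q)_!=p_!\circ q_!$ of integration along fibres, which is Fubini --- but you organise it around the clean intermediate identity $Q_{\chi_1}(f_1)\ast Q_{\chi_2}(f_2)=Q_{\chi_{12}}(f_1\otimes f_2)$. This buys you two things the paper glosses over: first, well-definedness of Definition~\ref{dfn:convolution} (independence of the auxiliary data $\mathcal{O}_{\psi_i},p_i,q_i,g_i$), which the paper does not address explicitly; second, a transparent reduction of associativity of $\ast$ to associativity of the group law $m_G$, rather than an unwritten ``straightforward calculation''. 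Your handling of the technical points (the image of $m_{\mathbb{T}^2}\circ(p_1\times p_2)$ not lying in a single chart, and the disjoint-union trick for distributivity) is appropriate and grounded in the remarks and propositions already available.
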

\begin{proof}
Associativity follows from a straightforward calculation, applying the familiar Fubini theorem. The other algebraic properties follow immediately from the definition.
\end{proof}
Now let us define involution. Consider the inversion map $\iota_{\T^2} : \T^2 \to \T^2$. Let $\chi : \mathcal{O}_{\chi} \to G(M,\omega)$ be a plot in $D(G(M,\omega))$. In view of item (j) in remark \ref{rem:cartprod} put $\mathcal{O}_{\chi}^{-1} = \iota_{\T^2}(\mathcal{O}_{\chi})$ and $\chi^{-1} = \chi \circ \iota_{\T^2}^{-1}$. Then $(\chi^{-1},\mathcal{O}_{\chi}^{-1})$ is obviously an element of $D(G(M,\omega))$. 
\begin{definition}\label{dfn:involution}
For every $f \in C^{\infty}_c(\mathcal{O}_{\chi};\Omega^1(\mathcal{O}_{\chi}))$ we define:
\begin{enumerate}
\item $f^{\ast} = \overline{f} \circ \iota_{\T^2}^{-1}$ in $C^{\infty}_c(\mathcal{O}_{\chi}^{-1};\Omega^1(\mathcal{O}_{\chi}^{-1}))$
\item $(Q_{\chi}(f))^{\ast} = Q_{\chi^{-1}}(f^{\ast})$.
\end{enumerate}
\end{definition}
\begin{prop}\label{prop:involution}
The operation $(Q_{\chi}(f)) \mapsto (Q_{\chi}(f))^{\ast}$ is an involution. It makes $C^{\infty}_c(G(M,\omega))$ a $\ast$-algebra over $\C$. Namely, for every $f_i$ in $C^{\infty}_c(\mathcal{O}_{\chi_i};\Omega^1\mathcal{O}_{\chi_i})$, $i=1,2$ and every $\lambda \in \C$ we have:
\begin{enumerate}
\item $((Q_{\chi}(f_1))^{\ast})^{\ast}=(Q_{\chi}(f_1))$
\item $(Q_{\chi_1}(f_1) \ast Q_{\chi_2}(f_2))^{\ast} = (Q_{\chi_2}(f_2))^{\ast} \ast (Q_{\chi_1}(f_1))^{\ast}$
\item $(Q_{\chi_1}(f_1) + Q_{\chi_2}(f_2))^{\ast} = (Q_{\chi_1}(f_1))^{\ast} + (Q_{\chi_2}(f_2))^{\ast}$
\item $(\lambda(Q_{\chi_1}(f_1)))^{\ast} = \overline{\lambda}(Q_{\chi_1}(f_1))^{\ast}$
\end{enumerate}
\end{prop}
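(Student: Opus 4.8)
The plan is as follows. Parts (a), (c) and (d) are essentially formal, so the real content is part (b), the anti-multiplicativity; but before anything else one must verify that the rule $Q_\chi(f)\mapsto Q_{\chi^{-1}}(f^\ast)$ of Definition \ref{dfn:involution} descends to a well-defined map on the quotient $C^\infty_c(G(M,\omega))$. For the latter I would argue as follows. By Proposition \ref{prop1} the maps $Q_\chi$ are uniquely pinned down by their behaviour under submersions, and an element of $C^\infty_c(G(M,\omega))$ is the class of a density supported in the atlas charts $\mathcal{O}_i\subseteq\mathbb{T}^2$. The plot-inversion operation $(\chi,\mathcal{O}_\chi)\mapsto(\chi^{-1},\mathcal{O}_\chi^{-1})$ of items (i)--(j) of Remarks \ref{rem:cartprod} intertwines pre-composition by a submersion $q$ with pre-composition by $\iota_{\mathbb{T}^2}\circ q\circ\iota_{\mathbb{T}^2}^{-1}$, and by the change-of-variables formula transport of a density along the diffeomorphism $\iota_{\mathbb{T}^2}$ commutes with integration along the fibres of any submersion; since complex conjugation is $\mathbb{C}$-linear and natural under pull-backs, the operation $f\mapsto f^\ast$ therefore carries the relations generating the subspace $\mathcal{I}$ of Definition \ref{dfn1} to relations of the same form, and the induced map on $C^\infty_c(G(M,\omega))$ is well defined.

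With this in hand, part (a) follows from $\iota_{\mathbb{T}^2}\circ\iota_{\mathbb{T}^2}=\id_{\mathbb{T}^2}$, which gives $(\chi^{-1})^{-1}=\chi$ and $(\mathcal{O}_\chi^{-1})^{-1}=\mathcal{O}_\chi$, together with $\overline{\overline z}=z$, so that $(f^\ast)^\ast=\overline{\,\overline f\circ\iota_{\mathbb{T}^2}^{-1}\,}\circ\iota_{\mathbb{T}^2}^{-1}=f$. Parts (c) and (d) are immediate once one observes, using Lemma \ref{lem2}, that $Q_{\chi_1}(f_1)+Q_{\chi_2}(f_2)$ may be rewritten as $Q_\psi(g_1'+g_2')$ for a single plot $\psi$, after which the involution is applied termwise; conjugate-linearity of $z\mapsto\overline z$ and linearity of pull-back by $\iota_{\mathbb{T}^2}^{-1}$ and of $Q_{\chi^{-1}}$ then yield (c) and (d).

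Part (b) is the heart of the matter. I would first reduce, via Lemma \ref{lem2} and Proposition \ref{prop1}, to the case in which $\chi_1,\chi_2$ are restrictions of $\pi$ to open subsets of $\mathbb{T}^2$ contained in atlas charts, so that the Lemma \ref{lem2}-data may be taken with $\mathcal{O}_{\psi_i}$ open in $\mathbb{T}^2$ and everything literally admits an $\iota_{\mathbb{T}^2}$-transport. Then, by Definition \ref{dfn:convolution}, $Q_{\chi_1}(f_1)\ast Q_{\chi_2}(f_2)$ is the class of $h:=(m_{\mathbb{T}^2})_!\big((p_1)_!(g_1)\otimes(p_2)_!(g_2)\big)$. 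Writing $A=(p_1)_!(g_1)$ and $B=(p_2)_!(g_2)$ and using (i) that integration along fibres commutes with complex conjugation, (ii) that $(\iota_{\mathbb{T}^2}^{-1})^\ast\circ(m_{\mathbb{T}^2})_!=(\iota_{\mathbb{T}^2}\circ m_{\mathbb{T}^2})_!$ for the diffeomorphism $\iota_{\mathbb{T}^2}$, and (iii) the anti-homomorphism identity $\iota_{\mathbb{T}^2}\circ m_{\mathbb{T}^2}=m_{\mathbb{T}^2}\circ\tau\circ(\iota_{\mathbb{T}^2}\times\iota_{\mathbb{T}^2})$ with $\tau(a,b)=(b,a)$, one computes $h^\ast=\overline h\circ\iota_{\mathbb{T}^2}^{-1}=(m_{\mathbb{T}^2})_!\big((\overline B\circ\iota_{\mathbb{T}^2}^{-1})\otimes(\overline A\circ\iota_{\mathbb{T}^2}^{-1})\big)$, where transport along the product map $\iota_{\mathbb{T}^2}\times\iota_{\mathbb{T}^2}$ splits off as the tensor of the transports and transport along $\tau$ interchanges the two factors. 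On the other hand, $\big(Q_{\chi_2}(f_2)\big)^\ast\ast\big(Q_{\chi_1}(f_1)\big)^\ast=Q_{\chi_2^{-1}}(f_2^\ast)\ast Q_{\chi_1^{-1}}(f_1^\ast)$, and choosing for $\chi_i^{-1}$ the Lemma \ref{lem2}-data obtained from that of $\chi_i$ by $\iota_{\mathbb{T}^2}$-transport and conjugation (legitimate by Proposition \ref{prop1}), Definition \ref{dfn:convolution} produces exactly the density displayed above. Equating the two and appealing to the Fubini interchange exactly as in the proof of Proposition \ref{prop:convolution} yields (b).

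The step I expect to require the most care is the last one: verifying that the presentation data attached to $\chi_i^{-1}$ genuinely is the $\iota_{\mathbb{T}^2}$-transport of that of $\chi_i$, and that one may freely commute the various restriction, fibre-integration and conjugation operations with the diffeomorphisms $\iota_{\mathbb{T}^2}$ and $\tau$ --- this is precisely the bookkeeping that items (i)--(j) of Remarks \ref{rem:cartprod} are designed to license, but it must be carried out honestly. As a cross-check (and an alternative proof), once the bijection $\Phi$ of Proposition \ref{prop2} is shown, later in this appendix, to intertwine the present $\ast$-algebra structure with the one on $C^\infty_c(\mathbb{T}^2\rtimes_\lambda\mathbb{R})$ coming from the group structure, all four properties follow at once from their standard counterparts on the groupoid side; but since that intertwining is established afterwards, I proceed by the direct computation above.
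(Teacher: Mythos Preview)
Your proposal is correct and follows essentially the same route as the paper: (c) and (d) are dismissed as immediate, (a) comes from $\iota_{\mathbb{T}^2}\circ\iota_{\mathbb{T}^2}=\id$ and $\overline{\overline f}=f$, and (b) is obtained by pushing $\iota_{\mathbb{T}^2}^{-1}$ through $(m_{\mathbb{T}^2})_!$ via the group identity relating inversion and multiplication, then recognising the result as the convolution in reversed order. The only cosmetic difference is that you invoke the general anti-homomorphism identity $\iota\circ m=m\circ\tau\circ(\iota\times\iota)$ with the swap $\tau$, whereas the paper uses commutativity of $\mathbb{T}^2$ to write $\iota\circ m=m\circ(\iota\times\iota)$ directly and then silently swaps the tensor factors; you also spell out the well-definedness check that the paper leaves implicit.
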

\begin{proof}
Items (c) and (d) are immediate consequences of definition $\ref{dfn:involution}$. As for items (a) and (b):
\begin{enumerate}
\item This follows from the fact that $\iota_{\mathbb{T}^2} \circ \iota_{\mathbb{T}^2} = \id_{\mathbb{T}^2}$ and $\overline{\overline{f}}=f$.
\item In view of lemma \ref{lem2}, definition \ref{dfn:involution} implies that $(Q_{\chi}(f))^{\ast}$ is the class of $(\iota_{\T^2}^{-1})_{!}(p_{!}(\overline{g}))$. Whence $(Q_{\chi_1}(f_1) \ast Q_{\chi_2}(f_2))^{\ast}$ is the class of $$(\iota_{\T^2}^{-1})_{!}((m_{\mathbb{T}^2})_!\left((p_1)_!(\overline{g}_1) \otimes (p_2)_!(\overline{g}_2)\right)) = (\iota_{\T^2}^{-1} \circ m_{\mathbb{T}^2})_{!}\left((p_1)_!(\overline{g}_1) \otimes (p_2)_!(\overline{g}_2)\right))$$ We have $\iota_{\T^2}^{-1}=\iota_{T^2}$, so $\iota_{\T^2}^{-1} \circ m_{\mathbb{T}^2} = \iota_{\T^2} \circ m_{\mathbb{T}^2} = m_{\mathbb{T}^2} \circ (\iota_{\T^2} \times \iota_{\T^2}) = m_{\T^2} \circ (\iota_{\T^2}^{-1} \times \iota_{\T^2}^{-1})$.  Whence the right-hand side is $$(m_{\mathbb{T}^2})_{!}\left((\iota_{\T^2}^{-1})_{!}((p_2)_!(\overline{g}_2)) \otimes (\iota_{\T^2}^{-1})_{!}((p_1)_!(\overline{g}_1))\right)$$ The latter is precisely $(Q_{\chi_2}(f_2))^{\ast} \ast (Q_{\chi_1}(f_1))^{\ast}$.
\end{enumerate}
\end{proof}

\subsection{Another $\ast$-algebra structure for $C^{\infty}(\mathbb{T}^2 \rtimes_{\lambda} \R)$}\label{app:anotherstaralggpd}

This $\ast$-algebra structure arises from viewing $\mathbb{T}^2 \rtimes_{\lambda} \R$ as a group. This group structure is the cartesian group structure arising from the groups $\mathbb{T}^2 = S^1 \times S^1$ and $(\R,+)$. Explicity, for $i=1,2$ consider $g_i = (e^{2\pi i \theta_i}, e^{2\pi i \eta_i}) \in \mathbb{T}^2$ and $t_i \in \R$. Then: $$(g_1,t_1)\cdot (g_2,t_2) = ((e^{2\pi i (\theta_1 + \theta_2)}, e^{2\pi i (\eta_1 + \eta_2)}),t_1 + t_2)$$ and $$g_1^{-1} = ((e^{2\pi i (-\theta_i)}, e^{2\pi i (-\eta_i})),-t_1)$$ Let us demote $m : (\mathbb{T}^2 \rtimes_{\lambda} \R) \times (\mathbb{T}^2 \rtimes_{\lambda} \R) \to \mathbb{T}^2 \rtimes_{\lambda} \R$ and $inv : \mathbb{T}^2 \rtimes_{\lambda} \R \to \mathbb{T}^2 \rtimes_{\lambda} \R$ the above multiplication and inversion maps respectively. 

Now let us define the new convolution and involution in $C^{\infty}_c(\mathbb{T}^2 \rtimes_{\lambda} \R)$. Let $(U,\phi_U, \mathbb{T}^2 \rtimes_{\lambda} \R)$ and $(V,\phi_V, \mathbb{T}^2 \rtimes_{\lambda} \R)$ be two bisubmersions adapted to the atlas $\cA$ and $f_U \in C^{\infty}(U;\Omega^1(U))$, $f_V \in C^{\infty}(V;\Omega^1(V))$.
\begin{enumerate}
\item The inverse of $(U,\phi_U, \mathbb{T}^2 \rtimes_{\lambda} \R)$ is the bisubmersion $(U^{-1},\phi_{U^{-1}}, \mathbb{T}^2 \rtimes_{\lambda} \R)$ where $U^{-1}=U$ and $\phi_{U^{-1}} = inv \circ \phi_{U}$. It is adapted to $\cA$. Put $f_U^{\ast} = \overline{f}_U \circ inv^{-1}$. Involution is defined by $$(Q^{\cA}_U(f_U))^{\ast} = Q^{\cA}_{U^{-1}}(f_U^{\ast})$$

\item The composition $(U \circ V, \phi_U \cdot \phi_V, \mathbb{T}^2 \rtimes_{\lambda} \R)$ is defined by $U \circ V = U \times_{s_U,r_V} V$ and $\phi_U \cdot \phi_V = m \circ (\phi_U,\phi_V)$. It is adapted to $\cA$ as well. Convolution is defined by $$Q^{\cA}_U(f_U)\ast Q^{\cA}_V(f_V) = Q^{\cA}_{V\circ W}(m_{!}(f_U \otimes f_V))$$
\end{enumerate}

he above involution and involution make $C^{\infty}_c(\mathbb{T}^2 \rtimes_{\lambda} \R)$ a $\ast$-algebra. The proof of this is as in \S \ref{app:anotherstaralgG}, so we omit it.

\subsection{$\Phi$ is a $\ast$-isomorphism}\label{app:Phistariso}

\begin{prop}
The bijection $\Phi$ is a $*$-homomorphism for the $\ast$-algebra structures defined in \S \ref{app:anotherstaralgG} and \ref{app:anotherstaralggpd}.
\end{prop}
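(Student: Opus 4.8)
The plan is to verify that $\Phi$ respects the two operations (convolution and involution) by unwinding both sides through the quotient maps $Q_\chi$ and $Q^{\cA}_U$, and reducing everything to the compatibility already recorded in Proposition \ref{prop1} together with the bisubmersion construction of Lemma \ref{lem5} and Remark \ref{rem2}. Recall that $\Phi$ was defined by $\Phi(Q^{\cA}_{ij}(f)) = Q_{\chi_{ij}}(f) = Q_j(r_!(f))$, so the core mechanism is: a bisubmersion $\mathcal{O}_i \times (-\epsilon_{ij},\epsilon_{ij})$ carries both a range submersion $r$ onto $\mathcal{O}_j$ (used for the groupoid picture) and, via $\pi|_{\mathcal{O}_j} \circ r$, a plot into $G(M,\omega)$; and for a general bisubmersion $(U,\phi_U)$ adapted to $\cA$ one shrinks so that $\phi_U(U) \subseteq \mathcal{O}_i \times (-\epsilon_{ij},\epsilon_{ij})$ and sets $\chi_U = \pi|_{\mathcal{O}_j} \circ r \circ \phi_U$.

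First I would check the involution. Fix a bisubmersion $(U,\phi_U)$ adapted to $\cA$ with associated plot $\chi_U$, and $f_U \in C^\infty_c(U;\Omega^1 U)$. On the groupoid side, $(Q^{\cA}_U(f_U))^* = Q^{\cA}_{U^{-1}}(\overline{f}_U \circ inv^{-1})$ where $inv$ is the cartesian-group inversion on $\mathbb{T}^2 \rtimes_\lambda \R$; applying $\Phi$ gives the class associated with the plot $\chi_{U^{-1}} = \pi|_{\mathcal{O}_{j'}} \circ r' \circ inv \circ \phi_U$. On the $G$-side, $(Q_{\chi_U}(f_U))^* = Q_{\chi_U^{-1}}(\overline{f_U} \circ \iota_{\T^2}^{-1})$ from Definition \ref{dfn:involution}, where $\chi_U^{-1} = \chi_U \circ \iota_{\T^2}^{-1}$. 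The key identity to exploit is that $\pi$ is a group homomorphism, so $\iota_G \circ \pi = \pi \circ \iota_{\T^2}$, and that the range map $r$ of the path-holonomy bisubmersion intertwines the groupoid inversion with $\iota_{\T^2}$ in the appropriate sense; combining these one sees that the two plots $\chi_{U^{-1}}$ and $\chi_U^{-1}$ (with their densities) are related by a morphism of plots that is a diffeomorphism, so Proposition \ref{prop1}(b) forces the two classes in $C^\infty_c(G(M,\omega))$ to agree. I would write this out using the explicit formulas $s(g,t)=g$, $r(g,t)=gt$, $(g,t)^{-1}=(gt,-t)$ recorded earlier.

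Next I would check the convolution. Take bisubmersions $(U,\phi_U)$, $(V,\phi_V)$ adapted to $\cA$, with plots $\chi_U, \chi_V$, and densities $f_U, f_V$. On the groupoid side, $Q^{\cA}_U(f_U) \ast Q^{\cA}_V(f_V) = Q^{\cA}_{U\circ V}(m_!(f_U \otimes f_V))$ where $U \circ V = U \times_{s_U,r_V} V$ with the \emph{group} multiplication $m$ on $\mathbb{T}^2 \rtimes_\lambda \R$; applying $\Phi$ yields the class associated to the plot $\pi|_{\mathcal{O}} \circ r \circ m \circ (\phi_U,\phi_V)$ on the fiber product. On the $G$-side, Definition \ref{dfn:convolution} gives the class of $(m_{\mathbb{T}^2})_!\big((p_1)_!(g_1) \otimes (p_2)_!(g_2)\big)$ built from $m_{\mathbb{T}^2}$ and the product plot $m_G \circ (\chi_U \times \chi_V)$. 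Because $\pi$ is a homomorphism, $\pi \circ m_{\mathbb{T}^2} = m_G \circ (\pi \times \pi)$ (item (g) of Remark \ref{rem:cartprod}), and because the group structure on $\mathbb{T}^2 \rtimes_\lambda \R$ is the cartesian one, $r \circ m = m_{\mathbb{T}^2} \circ (r \times r)$ on the relevant domains. Chaining these identities, and again invoking Proposition \ref{prop1}(b) to pass freely between a plot, a restriction of it, and a pullback of it along a submersion (here the fiber-product projection versus the product over $\mathcal{O}_{\chi_U} \times \mathcal{O}_{\chi_V}$), one identifies the two classes. A Fubini argument, exactly as in Proposition \ref{prop:convolution}, handles the interchange of the two integrations-along-the-fibers that appear on the two sides.

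The main obstacle I expect is bookkeeping rather than conceptual: matching the \emph{domains} of the plots and bisubmersions on the two sides. On the groupoid side the convolution lives on a fiber product $U \times_{s_U,r_V} V$ cut out using the source and range maps of $\mathbb{T}^2 \rtimes_\lambda \R$, whereas on the $G$-side Definition \ref{dfn:convolution} uses the full cartesian product $\mathcal{O}_{\chi_U} \times \mathcal{O}_{\chi_V}$ followed by $m_{\mathbb{T}^2}$, which by item (h) of Remark \ref{rem:cartprod} is replaced (after choosing a local section $\sigma$) by its image $\mathcal{O}_{\chi_{12}}$ in $\mathbb{T}^2$; one must check these choices do not affect the class, which is precisely what Proposition \ref{prop1} and the definition of the subspace $\mathcal{I}$ (resp.\ $\mathcal{J}$) are designed to guarantee. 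Once the right commuting diagram of submersions is in place, every equality reduces to ``two expressions $p_!(g)$ and $p'_!(g')$ with the same $q$-pushforward'', which is zero in the quotient by construction. So I would structure the proof as: (1) set up the comparison diagram for a pair of bisubmersions; (2) verify the involution identity; (3) verify the convolution identity via the homomorphism property of $\pi$ plus Fubini; (4) note $\mathbb{C}$-linearity is immediate, so $\Phi$ is a $*$-homomorphism, hence (being a bijection) a $*$-isomorphism, which together with the earlier identification of the completion of $C^\infty_c(\mathbb{T}^2 \rtimes_\lambda \R)$ with $A_\lambda$ finishes the statement.
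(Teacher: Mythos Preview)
Your approach is essentially the same as the paper's: both proofs reduce the compatibility of $\Phi$ with involution and convolution to the two commuting squares $r \circ inv = \iota_{\T^2} \circ r$ and $r \circ m = m_{\T^2} \circ (r \times r)$ (the latter using that $\T^2$ is abelian), and then push these identities through the quotient maps $Q^{\cA}$ and $Q$. The paper works only with the generating path-holonomy bisubmersions $\mathcal{O}_i \times (-\epsilon_{ij},\epsilon_{ij})$ and records those two squares explicitly as diagrams, whereas you phrase things for a general adapted bisubmersion and spend more words on the domain bookkeeping (fiber product versus cartesian product); but the mechanism is identical.

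One small correction: in your involution paragraph you say $r$ ``intertwines the groupoid inversion with $\iota_{\T^2}$''. For the groupoid inversion one has $r \circ \iota_{\text{gpd}} = s$, which is not $\iota_{\T^2} \circ r$. The identity you actually need (and the one the paper uses) is $r \circ inv = \iota_{\T^2} \circ r$ with the \emph{group} inversion $inv(g,t)=(g^{-1},-t)$; this holds because $\T^2$ is abelian. With that fix your argument goes through.
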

\begin{proof}
First we show that $\Phi$ respects involution. Choose a section $f$ in $C^{\infty}_c(\mathcal{O}_i \times (-\epsilon_{ij},\epsilon_{ij}); \Omega^1(\ker dr) \otimes r^{*}\Omega^1(\mathcal{O}_j))$. Put $U = \mathcal{O}_i \times (-\epsilon_{ij},\epsilon_{ij})$ and $\chi_{ij} = \pi|_{\mathcal{O}_i} \circ r$. Recall that the map $r$ is really the restriction of the target map of the groupoid $\mathbb{T}^2 \rtimes_{\lambda} \R$. The inversion map $\iota_{\mathbb{T}^2 \rtimes_{\lambda} \R}$ satisfies $r \circ \iota_{\mathbb{T}^2 \rtimes_{\lambda} \R} = s$, where $s$ is the source map of $\mathbb{T}^2 \rtimes_{\lambda} \R$. It follows that $U^{-1} = \mathcal{O}_j \times (-\epsilon_{ij},\epsilon_{ij})$ and $r : \mathcal{O}_j \times (-\epsilon_{ij},\epsilon_{ij}) \to \mathcal{O}_i$. In particular, the following diagram commutes:
\[
\begin{CD}
\mathcal{O}_i \times (-\epsilon_{ij},\epsilon_{ij})             @>r>>   \mathcal{O}_j \\ 
@V\iota_{\mathbb{T} \rtimes_{\lambda} \R}VV                @VV\iota_{\mathbb{T}}V \\
\mathcal{O}_j \times (-\epsilon_{ij},\epsilon_{ij})            @>r>>   \mathcal{O}_i
\end{CD}
\]
Hence we have $(Q^{\cA}_{ij}(f))^{\ast} = Q^{\cA}_{ji}(\overline{f}\circ \iota^{-1}_{\mathbb{T}^2 \rtimes_{\lambda} \R})$ and $(Q_j(r_{!}(f)))^{\ast} = Q_{i}(\overline{r_!(f)}\circ \iota_{\mathbb{T}^2}^{-1})$. Therefore, we find: 
\begin{equation*}
\begin{split}
\Phi((Q^{\cA}_{ij}(f))^{\ast}) & =  \Phi(Q^{\cA}_{ji}(\overline{f}\circ\iota^{-1}_{\mathbb{T}^2 \rtimes_{\lambda} \R})) = Q_{i}(r_{!}(\overline{f} \circ \iota^{-1}_{\mathbb{T}^2 \rtimes_{\lambda} \R})) \\ & = Q_{i}(r_{!}(\overline{f})\circ \iota^{-1}_{\mathbb{T}^2}) = Q_{i}(\overline{r_{!}(f)}\circ \iota^{-1}_{\mathbb{T}^2}) \\ & = (Q_i(r_{!}f))^{\ast} = (\Phi(Q^{\cA}_{ij}(f)))^{\ast}
\end{split}
\end{equation*}
The fact that $\Phi$ respects convolution follows from the definitions of convolution in Definition \ref{dfn:convolution} and \S \ref{sec:anotherstaralggpd} and the commutativity of the following diagram: 
\[
\begin{CD}
(\mathbb{T}^2 \rtimes_{\lambda} \R) \times (\mathbb{T}^2 \rtimes_{\lambda} \R)             @>m>>   \mathbb{T}^2 \rtimes_{\lambda} \R \\ 
@Vr\times rVV                @VVrV \\
\mathbb{T}^2 \times \mathbb{T}^2            @>m_{\mathbb{T}^2}>>   \mathbb{T}^2
\end{CD}
\]
\end{proof}

\bibliographystyle{halpha} 



\end{document}